\newtheorem{theorem}{Theorem}[section]
\newtheorem{lemma}[theorem]{Lemma}
\newtheorem{proposition}[theorem]{Proposition}
\newtheorem{claim}[theorem]{Claim}
\newtheorem{conjecture}[theorem]{Conjecture}
\newtheorem{question}[theorem]{Question}
\newtheorem{definition}[theorem]{Definition}
\newcommand{\RR}{\mathbb{R}}
\newcommand{\PP}{\mathbb{P}}
\newcommand{\EE}{\mathbb{E}}
\newcommand{\cA}{\mathcal{A}}
\newcommand{\cC}{\mathcal{C}}
\newcommand{\cT}{\mathcal{T}}
\DeclareMathOperator{\Bin}{Bin}
\DeclareMathOperator{\dist}{dist}
\DeclareMathOperator{\Var}{Var}
\newcommand{\eps}{\varepsilon}
\def\1{\mathbbm{1}}
\author{Asaf Ferber}
\address{Mathematics Department, University of California Irvine, Irvine 92697, USA}
\email{asaff@uci.edu}
\author{Michael Krivelevich}
\address{School of Mathematical Sciences, Tel Aviv University, Tel Aviv 6997801, Israel}
\email{krivelev@tauex.tau.ac.il}
\author{Marcelo Sales}
\address{Mathematics Department, University of California Irvine, Irvine 92697, USA}
\email{mtsales@uci.edu}
\author{Wojciech Samotij}
\address{School of Mathematical Sciences, Tel Aviv University, Tel Aviv 6997801, Israel}
\email{samotij@tauex.tau.ac.il}
\thanks{AF and MS are supported by US Air force grant FA9550-23-1-0298. MK is partially supported by NSF-BSF grant 2023688.  WS is supported by the Israel Science Foundation grant 2110/22 and the ERC Consolidator Grant 101044123 (RandomHypGra).  Finally, AF and WS were also supported by the grant 2019679 from the United States National Science Foundation (NSF) and the United States--Israel Binational Science Foundation (BSF)}
\title[On the edge expansion of random polytopes]{On the edge expansion of random polytopes}
\begin{document}

\begin{abstract}
  A \emph{$0/1$-polytope} in $\RR^n$ is the convex hull of a subset of $\{0,1\}^n$. The graph of a~polytope $P$ is the graph whose vertices are the zero-dimensional faces of $P$ and whose edges are the one-dimensional faces of~$P$.  A conjecture of Mihail and Vazirani states that the edge expansion of the graph of every $0/1$-polytope is at least one.  We study a random version of the problem, where the polytope is generated by selecting vertices of $\{0,1\}^n$ independently at random with probability $p\in (0,1)$.  Improving earlier results, we show that, for any $p\in (0,1)$, with high probability the edge expansion of the random $0/1$-polytope is bounded from below by an absolute constant.
\end{abstract}

\maketitle

\section{Introduction}\label{sec:intro}

A \emph{$0/1$-polytope} in $\RR^n$ is the convex hull of a subset of $\{0,1\}^n$, i.e., a polytope whose vertices have all coordinates either $0$ or $1$. These polytopes are the central object of study in polyhedral combinatorics, due to their connections to linear programming and combinatorial optimization. Most of these connections arise from the ability to encode combinatorial objects via characteristic vectors. To be more precise, given a set system $\cA$ with a ground set of size~$n$, one can consider the associated $0/1$-polytope, which is the convex hull of the characteristic vectors of all the elements of~$\cA$. For many combinatorial objects (e.g., matchings, matroids, order ideals, independent sets), interesting structural properties can be expressed as geometric properties of the associated polytopes.

For a polytope $P$, the graph $G_P$ of $P$ is the graph whose vertices are the zero-dimensional faces of $P$ and whose edges are the one-dimensional faces of $P$. Several properties of the graph of $0/1$-polytopes have been studied in the past \cites{combpolytope06, graphpolytope84, graphpolytopeII84, graphpolytope89, cutpolytope}. Here we focus on their expansion.
For a graph $G$ with vertex set $V$, we define the \emph{edge expansion} of $G$ (also known as the \emph{Cheeger constant} of $G$) by
\[
  h(G) \coloneqq \min\left\{\frac{e(S,V\setminus S)}{|S|} : S \subseteq V \text{ and } 1 \leq |S| \leq \frac{|V|}{2}\right\}.
\]

It is well known (see, e.g., \cites{expansionpolysurvey92, markovchain96}) that if $G$ is the graph of a $0/1$-polytope whose vertices have degree bounded by a polynomial in $n$, then a lower bound on the Cheeger constant of $G$ translates to an upper bound on the mixing time of a random walk on $G$.
Performing random walks on the graphs of $0/1$-polytopes can be used to uniformly generate random elements in classes of combinatorial objects.
In many cases, this allows us to design randomized algorithms that approximately count the number of objects; the running time of such algorithms is inversely proportional to the Cheeger constant of the graph of the underlying $0/1$-polytope.
For instance, this method was used by Jerrum, Sinclair, and Vigoda \cites{mixingapplication89, mixingapplication04} to design a polynomial-time approximation algorithm for computing the permanent of a matrix with nonnegative entries. 
Inspired by these connections, Mihail and Vazirani \cites{expansionpolysurvey92, mihailvaziraniconj92} made the following conjecture about the edge expansion of the graph of an arbitrary $0/1$-polytope.
A proof of this conjecture would have many important applications to the analysis of randomized algorithms \cites{mihailvaziraniconj92, gillmannthesis07, expansionpolytope04}.

\begin{conjecture}\label{conj:mihailvazirani}
  Every $0/1$-polytope $P$ satisfies $h(G_P) \geq 1$.
\end{conjecture}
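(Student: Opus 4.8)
One may assume that $P\subseteq[0,1]^n$ is full-dimensional (delete or project out any coordinate that is constant on the vertex set) and that every defining point is a vertex, so that $G_P$ has vertex set $V=\mathrm{vert}(P)$; write $N=|V|$. Fix $S\subseteq V$ with $1\le|S|\le N/2$; the goal is $e_{G_P}(S,V\setminus S)\ge|S|$. The natural plan is induction on the dimension, splitting along a coordinate. For $i\in[n]$ and $b\in\{0,1\}$ put $V_i^b=\{v\in V:v_i=b\}$, both nonempty by full-dimensionality. Since the halfspace $\{x_i\le b\}$ (resp. $\{x_i\ge b\}$) contains $P$, the set $F_i^b:=P\cap\{x_i=b\}$ is a face of $P$, a $0/1$-polytope of dimension $<\dim P$ with $\mathrm{vert}(F_i^b)=V_i^b$; moreover $G_{F_i^b}=G_P[V_i^b]$, because a segment $\mathrm{conv}\{u,v\}$ with $u,v\in V_i^b$ lies in $\{x_i=b\}$ and is hence a $1$-face of $P$ if and only if it is a $1$-face of $F_i^b$. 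By the inductive hypothesis $h(G_{F_i^b})\ge 1$.

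Writing $S_i^b=S\cap V_i^b$ and decomposing the cut as $e_{G_P}(S,V\setminus S)=e_{G_{F_i^0}}(S_i^0,V_i^0\setminus S_i^0)+e_{G_{F_i^1}}(S_i^1,V_i^1\setminus S_i^1)+m_i$, where $m_i$ counts the \emph{mixed} $i$-edges (those with endpoints having $i$-th coordinates $0$ and $1$) crossing $(S,V\setminus S)$, the inductive hypothesis yields $e_{G_{F_i^b}}(S_i^b,V_i^b\setminus S_i^b)\ge\min\{|S_i^b|,|V_i^b|-|S_i^b|\}$. Consequently, if for \emph{some} coordinate $i$ one has $|S_i^b|\le|V_i^b|/2$ for both $b$, then already $e_{G_P}(S,V\setminus S)\ge|S_i^0|+|S_i^1|=|S|$ and we are done. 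So we may assume that for every $i$ the set $S$ is heavier on one side, say $|S_i^{\sigma(i)}|>|V_i^{\sigma(i)}|/2$ for a (necessarily unique, since $|S|\le N/2$) choice $\sigma(i)\in\{0,1\}$, and the target becomes $m_i\ge 2|S_i^{\sigma(i)}|-|V_i^{\sigma(i)}|$: one must produce enough mixed $i$-edges leaving the heavy part of $S$ for $V\setminus S$. A natural first input is the elementary fact that every vertex $v$ of a full-dimensional $0/1$-polytope with $v_i=0$ has a neighbour with $i$-th coordinate $1$ (and symmetrically), since otherwise the cone of $P$ at $v$, generated by the edge directions at $v$, would lie in $\{x_i=0\}$, forcing $P$ into that hyperplane. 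Thus $S_i^{\sigma(i)}$ sends at least $|S_i^{\sigma(i)}|$ mixed $i$-edges into $V_i^{1-\sigma(i)}$ in total, and one would like to show that no more than $|V_i^{\sigma(i)}\setminus S_i^{\sigma(i)}|$ of them return to $S$.

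\textbf{The main obstacle} is precisely this last step: nothing in the $(\dim P-1)$-dimensional hypothesis prevents the mixed edges from the heavy half from being absorbed inside $S\cap V_i^{1-\sigma(i)}$ for \emph{every} coordinate $i$ at once, and the family $\{(S_i^0,S_i^1)\}_{i\in[n]}$ couples these constraints in a way that a single-coordinate split cannot see. Making this work seems to require either (i) combining all $n$ splits — averaging the resulting inequalities, or choosing $i$ to maximize the imbalance of $S$ — together with sharper descriptions of which vertex pairs span a $1$-face (for instance, $\mathrm{conv}\{u,v\}$ is an edge of $P$ whenever no vertex of $P$ other than $u,v$ lies in the subcube spanned by $u$ and $v$, and monotone-path arguments in the spirit of Naddef's diameter bound), or (ii) abandoning the induction for a global certificate: a multicommodity flow on $G_P$ routing one unit between each ordered pair of vertices along $1$-faces with edge congestion at most $N/2$ would give $h(G_P)\ge1$ at once, the cube $P=[0,1]^n$ being exactly the instance in which the coordinate-flipping ``canonical paths'' attain congestion $N/2$. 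Carrying out either programme for an arbitrary $0/1$-polytope is the genuinely hard core of the problem — this is the content of the still-open \cref{conj:mihailvazirani} — and the rest of this paper establishes the statement in the generic regime, where $P$ is the convex hull of a $p$-random subset of $\{0,1\}^n$.
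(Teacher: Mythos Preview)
The statement you are trying to prove is \emph{not} proved in the paper: it is the Mihail--Vazirani conjecture, stated explicitly as an open problem. The paper does not offer any argument toward it in full generality; its contribution is \Cref{thm:main}, which establishes a constant lower bound on $h(G_P)$ when $P$ is a random $0/1$-polytope. So there is nothing to compare your proposal to on the paper's side.

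Your write-up is, in fact, honest about this: after setting up the coordinate-split induction and the mixed-edge count, you correctly identify the obstruction --- nothing forces the mixed $i$-edges out of $S$ simultaneously for all $i$ --- and you end by conceding that closing this gap is exactly the content of the open conjecture. That diagnosis is accurate. The inductive scheme you describe is the folklore first attempt; it recovers the hypercube case (where the two facets $\{x_i=b\}$ partition $V$ evenly and the mixed edges form a perfect matching), but for general $0/1$-polytopes the ``heavy side'' can occur on every coordinate at once and the bookkeeping $m_i\ge 2|S_i^{\sigma(i)}|-|V_i^{\sigma(i)}|$ has no obvious certificate. Neither your option~(i) (averaging over coordinates, monotone-path/Naddef-style arguments) nor option~(ii) (a global multicommodity flow with congestion $N/2$) is known to go through; these are precisely the programmes that have resisted attack for three decades.

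In short: there is no proof in the paper to match, your proposal is not a proof either, and the gap you flag is the real one.
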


In other words, \Cref{conj:mihailvazirani} states that the Cheeger constant of every $0/1$-polytope is at least as large as that of the hypercube, as it is well known~\cites{Harper64 ,B67, Hart76, Lindsey64} that $h(Q^n) = 1$ for all $n$, see \Cref{thm:Harper} below.  The conjecture has been verified for a variety of polytopes associated with combinatorial objects, such as perfect matching polytopes, order ideal polytopes, and matroid polytopes \cites{expansionpolytope04, mihailthesis89}.  In a recent breakthrough, Anari, Liu, Gharam, and Vinzant \cite{expansionpolytope24} showed that the conjecture holds for matroid base polytopes, i.e., any $0/1$-polytope associated with a matroid. Despite this progress, \Cref{conj:mihailvazirani} remains wide open for general $0/1$-polytopes.

Another special class of polytopes, and perhaps an interesting intermediate step in this context, is that of random polytopes.  Let $Q^n \coloneqq \{0,1\}^n$ be the $n$-dimensional hypercube.  We consider the following model of random $0/1$-polytopes: Given $p \in (0,1)$, let $U$ be a random subset of $Q^n$, where each element is selected independently with probability $p$. We define the random polytope $P_{n,p}$ to be the convex hull of $U\subseteq \{0,1\}^n$.  The problem of estimating the expansion of $P_{n,p}$ was introduced by Gillmann \cite{gillmannthesis07}. In recent work, Leroux and Rademacher \cite{randomexppolytopes24} showed for every $p\in (0,1)$ that, with high probability\footnote{An event $A$, or rather a sequence $(A_n)$ of events indexed by the dimension $n$, happens \emph{with high probability} (\emph{whp} for short) if the probability of $A_n$ tends to one as $n$ tends to infinity.}, the graph of the polytope $P_{n,p}$ has expansion at least $1/(12n)$. Our main result is an improvement of this bound to a constant.

\begin{theorem}\label{thm:main}
  There exist absolute constants $\beta, \eta>0$ such that the following holds for sufficiently large~$n$.  If $p = p(n)\geq 2^{-0.99n}$, then whp the graph $G$ of $P_{n,p}$ satisfies $h(G)\geq \beta$;  moreover, for every $A \subseteq V(G)$ with $|A|\leq \eta |V(G)|$, we have $e(A, V(G) \setminus A)\geq |A|$.
\end{theorem}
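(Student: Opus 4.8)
The plan is to derive the edge expansion of $G$ from the well-understood edge-isoperimetry of the hypercube $Q^n$: given a vertex set $A$, we take the hypercube edges leaving $A$, and whenever such an edge $(a,a\oplus e_i)$ has its far endpoint $a\oplus e_i$ outside of $U$ — so that it is not available in $G$ — we \emph{repair} it by substituting a genuine edge of $P_{n,p}$ incident to $a$ that also leaves $A$. Two preliminary facts are needed. First, the vertices of any $0/1$-polytope are precisely the $0/1$-points defining it, so $V(G)=U$, and $|U|=(1+o(1))p2^n$ whp by a Chernoff bound. Second, for $u,v\in U$ with disagreement set $D=\{i:u_i\ne v_i\}$ and associated sub-cube $C=\{x\in\{0,1\}^n:x_i=u_i\text{ for all }i\notin D\}$, the segment $[u,v]$ is an edge of $P_{n,p}$ if and only if the relative interior of $[u,v]$ is disjoint from $\operatorname{conv}\bigl((U\cap C)\setminus\{u,v\}\bigr)$; in particular $[u,v]$ is an edge whenever $U\cap C=\{u,v\}$, and every hypercube edge with both endpoints in $U$ (the case $|D|=1$) is an edge of $G$.

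I would begin with two easy regimes. If $p\ge 1-\eps$ for a small constant $\eps$ then $G$ contains $Q^n[U]$, which is the hypercube with a sparse random set of vertices deleted and hence a very good expander, so we may assume $p\le 1-\eps$. (It is also worth noting that when $p$ is small enough — below $2^{-0.8n}$, say — a typical pair $u,v\in U$ spans an empty sub-cube $C$, so that whp $G$ is a nearly complete graph; this shortcuts the argument there but is not logically needed.) In all remaining cases I set $\ell=\lfloor\log_2(1/p)\rfloor$. As a base case for an induction, I would first verify that $P_{n,p}$ is full-dimensional whp: no hyperplane section of $Q^n$ contains more than $2^{n-1}$ points, and there are only $2^{O(n^2)}$ such sections, so $\Pr[U\text{ lies in a hyperplane}]\le 2^{O(n^2)}(1-p)^{2^{n-1}}=o(1)$; by Balinski's theorem $G$ is then $n$-connected, hence $n$-edge-connected, which already gives $e_G(A,V(G)\setminus A)\ge n\ge|A|$ whenever $1\le|A|\le n$.

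Now fix $A\subseteq U$ with $|A|\le|V(G)|/2$ and view $A$ inside $Q^n$. By the edge-isoperimetric inequality in the hypercube, $A$ sends at least $|A|\bigl(n-\log_2|A|\bigr)\ge|A|$ edges of $Q^n$ to $Q^n\setminus A$ (here $n-\log_2|A|\ge1$ because $|A|\le|U|/2<2^{n-1}$). Call $(a,a\oplus e_i)$ a \emph{good} edge if $a\oplus e_i\in U\setminus A$ — then $[a,a\oplus e_i]$ is already an edge of $G$ leaving $A$ — and a \emph{hole} if $a\oplus e_i\notin U$. For each hole I would produce a substitute: an edge $[a,v]$ of $G$ with $v\in U\setminus A$ and $i=\min D(a,v)$, the normalisation $i=\min D(a,v)$ guaranteeing that each edge of $G$ substitutes for at most one hole, so that no overcounting occurs. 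Substitutes are plentiful because for distances $d\le\ell$ a $d$-dimensional sub-cube contains only $p2^d\le1$ points of $U$ besides $a$ in expectation, whereas the number of candidate far endpoints $v$ at distance $d$ from $a$ with $\min D(a,v)=i$ is about $\binom{n-i}{d-1}$; summing over $d\le\ell$, a great many candidates lie in $U$ and satisfy $U\cap C=\{a,v\}$ (so $[a,v]$ is genuinely an edge), and one needs a constant fraction of them to avoid $A$. Feeding the good edges and the repaired holes back into the isoperimetric bound then yields $e_G(A,V(G)\setminus A)\ge\beta|A|$ for an absolute constant $\beta$, and — when $|A|$ is below a small constant multiple of $|V(G)|$, so that $n-\log_2|A|$ exceeds a large enough constant — the stronger bound $e_G(A,V(G)\setminus A)\ge|A|$, proving \Cref{thm:main}.

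The crux is the phrase ``one needs a constant fraction of them to avoid $A$'': since $A$ is arbitrary it could, a priori, contain every candidate far endpoint reachable from the holes of a large, clustered portion of $A$. I expect to handle this by an induction on $\lceil\log_2|A|\rceil$ with base case $|A|\le n$ as above. In the inductive step, a set $A$ in the next dyadic range is split into a \emph{shallow} part — vertices for which the repair succeeds for a positive fraction of their holes, handled by the previous paragraph — and a \emph{deep} part, on which the failure of repair forces $A$ to coincide locally with a union of sub-cube sections $U\cap C'$; the edges leaving the deep part are bounded from below either directly (a sub-cube section $U\cap\{x_j=c\}$ already sends every $|D|=1$ edge in direction $j$ outward, and more via the repair) or by applying the inductive hypothesis to smaller pieces of it. Making this shallow/deep dichotomy quantitative, ensuring the deep part is strictly smaller so that the recursion terminates, and keeping the constants $\beta$ and $\eta$ mutually consistent throughout, is where the main difficulty lies.
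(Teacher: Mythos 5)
Your approach is genuinely different from the paper's. The paper projects $P$ onto the first $d$ coordinates, where $d$ is chosen so that $p2^{n-d}$ is a large constant $t$; the projected polytope $R$ is then a $(1-e^{-t})$-dense random subset of $Q^d$, for which they prove a new edge-isoperimetric inequality (Theorem~\ref{thm:verydense}) using a deterministic `` $\eps$-good'' pseudorandomness notion, and they push the expansion back to $G_P$ via a projection lemma (Lemma~\ref{lem:projection}). You instead propose to stay in $Q^n$, start from Harper's inequality applied to $A$, and ``repair'' the hypercube edges that leave $A$ but land outside $U$ by substituting short (length~$\le\ell\approx\log_2(1/p)$) genuine edges of $G$ incident to the same endpoint. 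The preliminary facts you cite (that $V(G)=U$, that the subcube spanned by $u,v$ gives a face of $P$ so that edges of $G$ within it are detected by the subcube alone, and the base case via Balinski) are all correct.

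The genuine gap is exactly the one you flag yourself: the substitute endpoint $v$ must lie in $U\setminus A$, and $A$ is adversarial. Your proposed fix --- an induction on $\lceil\log_2|A|\rceil$ with a shallow/deep dichotomy, where the deep part is claimed to coincide locally with unions of subcube sections $U\cap C'$ --- is a sketch of a strategy, not an argument. There are at least three places where it is not clear it can be made to work. (i)~Failure of repair at a vertex $a$ does not obviously force $A$ to be a subcube section near $a$: repair can fail because $v\notin U$, because $U\cap C\supsetneq\{a,v\}$ (which need not preclude $[a,v]$ being an edge but breaks your sufficient criterion), or because $v\in A$, and disentangling these requires a structural lemma you have not stated. (ii)~The recursion needs the deep part to shrink by a quantifiable factor; nothing in the sketch guarantees this, and a bad $A$ could be entirely deep. (iii)~The normalization $i=\min D(a,v)$ makes the count of candidate far endpoints $\binom{n-i}{d-1}$ vanish as $i\to n$, so for a constant fraction of directions $i$ there may simply be no candidates at all, independent of $A$; one would need to re-examine which holes actually need repair and rebalance the accounting. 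More globally, the paper avoids a union bound over the $2^{|U|}$ choices of $A$ by first establishing a short list of whp pseudorandom properties and then arguing deterministically; your repair scheme would need to be recast in a similar two-phase form, and it is not clear what the right deterministic property is. I would not call the approach wrong, but in its current state the hard case is not resolved, and I suspect that making the shallow/deep induction rigorous would end up reinventing something close to the paper's projection and dense-isoperimetry machinery.

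(One minor side note: the threshold $p\le 2^{-0.8n}$ in your parenthetical about $G$ being nearly complete is too generous; the first-moment calculation in the paper's Proposition~\ref{prop:sparse} needs $p^3 7^n=o(1)$, i.e., roughly $p\le 2^{-0.936n}$. Since you do not rely on this, it does not affect the main issue.)
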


Note that \Cref{thm:main} does not apply when the density \( p \) is very small. To complement this, we use a result of Bondarenko and Brodskii~\cite{BondarenkoBrodskiiCliques}, which states that for \( p \leq 2^{-5n/6} \), the polytope graph is, with high probability, a clique. For the sake of completeness, we include a short proof of a weaker version of this result.

\begin{proposition}\label{prop:sparse}
  For any $\varepsilon >0$, if $p = p(n)\leq c^{n}$ for $c<7^{-1/3}$, then whp the graph $G$ of $P_{n,p}$ is complete and thus $h(G) \ge |P_{n,p}| - 1$.
\end{proposition}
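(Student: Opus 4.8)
The plan is to establish the stronger fact that, whp, the polytope $P_{n,p}=\mathrm{conv}(U)$ is \emph{$2$-neighborly}, i.e.\ every pair of its vertices spans an edge; then $G$ is the complete graph on $|P_{n,p}|$ vertices and the lower bound on $h(G)$ is immediate. The starting point is the elementary criterion that, for distinct $u,v\in U$, the segment $[u,v]$ fails to be an edge of $\mathrm{conv}(U)$ precisely when some point in the relative interior of $[u,v]$ belongs to $\mathrm{conv}(U\setminus\{u,v\})$. The first step is a localisation special to $0/1$-polytopes: writing $D\coloneqq\{j:u_j\ne v_j\}$, $d\coloneqq|D|$, and letting $C\coloneqq\{w\in\{0,1\}^n:w_j=u_j\text{ for all }j\notin D\}$ be the $d$-dimensional subcube through $u$ and $v$ (in which $u,v$ are antipodal), one observes that a convex combination of distinct $0/1$-vectors can agree with $u$ in a coordinate $j\notin D$ only if every vector used already has its $j$-th coordinate equal to $u_j$; since every point of $[u,v]$ agrees with $u$ off $D$, a witness for $[u,v]$ being a non-edge must lie entirely inside $C$. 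Hence $[u,v]$ is a non-edge if and only if $\mathrm{relint}[u,v]$ meets $\mathrm{conv}\bigl((U\cap C)\setminus\{u,v\}\bigr)$ — only the (typically very few) points of $U$ inside $C$ are relevant.

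Next I would extract a combinatorially rigid witness. If $[u,v]$ is a non-edge then, by Carathéodory inside the affine span of $C$, there are $2\le k\le d+1$ distinct points $w_1,\dots,w_k\in(U\cap C)\setminus\{u,v\}$ and weights $\lambda_i>0$ with $\sum_i\lambda_i=1$ and $\sum_i\lambda_iw_i\in\mathrm{relint}[u,v]$. Clearing denominators, and using that $u$ and $v$ are vertices (so the coefficient of $v-u$ is a positive integer $\gamma$), this becomes an integer relation $\sum_{i=1}^k m_i(w_i-u)=\gamma(v-u)$ with nonnegative integers $m_i$, not all zero. Reading it coordinate by coordinate: off $D$ all of $u,v,w_1,\dots,w_k$ coincide, while on $D$ each coordinate prescribes the multiplicity with which the $w_i$ carry the $v$-value there (for $\gamma=1$, exactly one of them does, so the ``difference supports'' of the $w_i$ on $D$ partition $D$). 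In every case a non-edge therefore forces a configuration $(u,v,w_1,\dots,w_k)\subseteq U$ with $k\ge2$ whose local pattern in each of the $n$ coordinates lies in a short explicit list.

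The estimate itself is a first-moment computation. Bounding the number of non-edges by the number of such configurations realised inside $U$ and summing over $k$,
\[
  \mathbb E\bigl[\#\{\text{non-edges}\}\bigr]\ \le\ \sum_{k\ge2}\#\{\text{configurations of size }k+2\}\cdot p^{\,k+2},
\]
and the number of configurations of size $k+2$ is counted coordinate by coordinate — in each coordinate the admissible patterns for $\bigl(u_j,v_j,(w_i)_j\bigr)_i$ number at most some explicit $a_k$ — so that it is at most $a_k^{\,n}$. With $p\le c^{\,n}$ this gives $\mathbb E[\#\{\text{non-edges}\}]\le\sum_{k\ge2}(a_k\,c^{\,k+2})^{\,n}$, a series whose terms decay once $k$ is large, so that it is governed by its first few terms; the hypothesis $c<7^{-1/3}$ (together with the correct value of $a_k$) is what makes this sum $o(1)$. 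Markov's inequality then yields that whp no pair is a non-edge, i.e.\ $G=K_{|P_{n,p}|}$, which completes the proof.

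The step I expect to be the crux is the control of \emph{large} witnesses: a minimal witness can have size as large as $d+1$, and $d$ can be as large as $n$, so one cannot simply union-bound over bad configurations of bounded size (which would suffice in denser regimes). What makes the first-moment bound converge is the interplay of the localisation in the first step — restricting attention to the subcube $C$ — with the rigid additive structure of the witnesses in the second step, which together force the coordinate-by-coordinate count to be far smaller than the trivial $\binom{2^d}{k}$ bound; carrying the constants through this count, and in particular handling the witnesses with $\gamma\ge2$, is where the precise threshold $c<7^{-1/3}$ comes from (and where sharper bookkeeping, in the spirit of Bondarenko--Brodskii, would give a better constant).
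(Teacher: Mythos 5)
Your overall strategy --- prove $2$-neighborliness whp via a first-moment bound over witnessing configurations --- is sound in spirit, and the subcube localization in your first step is correct, but the route you take is genuinely harder than the paper's and has a concrete gap at precisely the step you flag as the crux.

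The paper never characterizes non-edges. It observes that for any two distinct sets $A, B \subseteq [n]$, the linear functional $c = -\1_{(A\cup B)^{\mathsf{c}}}$ vanishes on $\1_A$ and $\1_B$ and is strictly negative on $\1_C$ for every $C$ with $C \nsubseteq A\cup B$; hence $\{\1_A,\1_B\}$ is an edge of $P_\Omega$ as soon as no third $C \in \Omega$ satisfies $C \subseteq A\cup B$. This reduces the problem to ordered triples $(A,B,C)$ with $C \subseteq A\cup B$, which are counted per coordinate by forbidding the single pattern $(0,0,1)$, giving exactly $7^n$, and then $p^3\cdot 7^n = o(1)$ yields the threshold immediately. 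Note that the ``$C$ dominated by $A\cup B$'' relation is strictly weaker than your subcube localization (it requires only $w_j=0$ where $u_j=v_j=0$, not also $w_j=1$ where $u_j=v_j=1$): that is precisely why a single auxiliary point suffices as a witness of possible failure, and why Carath\'eodory-sized witnesses never enter the picture.

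Your plan, by contrast, stops exactly where it gets hard. You arrive at the bound $\mathbb{E}[\#\text{non-edges}] \le \sum_{k\ge 2} (a_k\,c^{k+2})^n$ and then assert that ``the correct value of $a_k$'' makes it $o(1)$ for $c<7^{-1/3}$ --- but $a_k$ is never computed, and the assertion is not obviously true. The count one reads off directly from your localization (off $D$: two agreeing patterns; on $D$: two choices for $(u_j,v_j)$ times at most $2^k$ choices for $\bigl((w_i)_j\bigr)_i$) gives only $a_k \le 2^{k+1}+2$, whose threshold $a_k^{-1/(k+2)}$ tends to $1/2$ as $k\to\infty$; since $1/2 < 7^{-1/3}$, this does not cover the whole range claimed in the proposition. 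To close the gap you would have to show that the additive relation $\sum_i m_i(w_i-u)=\gamma(v-u)$ cuts down the admissible per-coordinate patterns enough to push the infimum of $a_k^{-1/(k+2)}$ above $7^{-1/3}$, uniformly in $k$ and $\gamma$ --- the ``sharper bookkeeping'' you explicitly defer. As written, then, the argument establishes the result only for $c<1/2$, not for the stated range, and the step that would recover $7^{-1/3}$ is missing.
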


\section{Preliminaries}\label{sec:preliminaries}

\subsection{Proof overview}\label{subsec:outline}

In this section, we describe our proof strategy for \Cref{thm:main}. For the sake of clarity and comparison purposes, we will first briefly describe the approach developed in \cite{randomexppolytopes24}. Given $d<n$, let $\pi \colon Q^n \rightarrow Q^d$ be the projection onto the first $d$ coordinates.  Note that the projection $\pi$ naturally partitions $Q^n$ into $2^d$ disjoint preimages of size $2^{k}$, where $k \coloneqq n-d$.  This fact implies that, for sufficiently large  $k$, we have whp that the random polytope $P:=P_{n,p} \subseteq Q^n$ projects in a balanced way onto the full hypercube, i.e., $\pi(P) = Q^d$, and the size of each \emph{fiber} $P \cap \pi^{-1}(\{x\})$ concentrates around its mean $p2^k$, simultaneously for all $x \in Q^d$.

Given a set $A \subseteq P$, one can classify the fibers of all vertices of $Q^d$ into three types: the subset $U \subseteq Q^d$ of those fully occupied by elements of $A$, the subset $M \subseteq Q^d$ of those partially occupied by elements of $A$, and the fibers that are disjoint from $A$. The heart of \cite{randomexppolytopes24} is a projection lemma that ensures that either there are many edges in $E_{G_P}(A, A^\mathsf{c})$ coming from the fibers in $M$, or there are many edges from $U$ to $U^\mathsf{c}$. The latter relies on the fact that the set $U$ has good expansion in $Q^d$ and that $\pi(P) = Q^d$. Unfortunately, the resulting bound on the expansion of $G_P$ is inversely proportional to $p2^k$, which is chosen to be $\Omega(d)$ in order to ensure that $\pi(P) = Q^d$.

In the proof of \Cref{thm:main}, to achieve constant edge expansion, we select $k \coloneqq n-d$ such that $p2^k$ is an absolute constant.  This choice introduces several challenges whose overcoming requires new ideas. First, since the projection $R \coloneqq \pi(P)$ is no longer the full hypercube, the projection lemma from \cite{randomexppolytopes24} can no longer be used.  To overcome this, we develop a more general projection lemma (see \Cref{lem:projection}) that allows us to obtain a lower bound on $e_{G_P}(A, A^\mathsf{c})$ in terms of the edge expansion of the projected polytope $R$.  Second, since $p2^k$ is a constant, one cannot ensure that the projection is balanced, requiring a much more careful analysis of the typical projection of $P$ (see \Cref{subsec:randprojection}). Finally, since we cannot rely on the hypercube's strong expansion properties, we establish a new edge-isoperimetric inequality tailored for very dense random $0/1$-polytopes (see \Cref{thm:verydense}).  This last result is perhaps the main technical contribution of our work.

The paper is organized as follows. In the remainder of \Cref{sec:preliminaries}, we introduce the probabilistic and geometric tools used throughout the paper. In particular, \Cref{subsec:projection} contains the proof of our version of the projection lemma.  The short \Cref{sec:sparse} is dedicated to the proof of \Cref{prop:sparse}.  In \Cref{sec:verydense}, we establish an edge-isoperimetric inequality for very dense polytopes, while \Cref{sec:general} presents the proof of \Cref{thm:main}.

\subsection{Concentration}

We shall use the following well-known estimate for tail probabilities of binomial random variables (see, e.g., \cite[Theorem~2.1]{JRLrandomgraphs}).

\begin{theorem}\label{chernoff}
  For every positive integer $n$ and all $p \in [0,1]$ and all $\alpha \ge 0$,
  \[
    \PP\bigl(\Bin(n,p) \ge (1+\alpha) n p\bigr) \le \exp\bigl(-np \cdot ((1+\alpha) \log (1+\alpha) - \alpha)\bigr) \le \exp\left(-\frac{\alpha^2np}{2(1+\alpha)}\right).
  \]
  In particular, for every $\ell \ge enp$,
  \[
    \PP\bigl(\Bin(n,p) \ge \ell\bigr) \le \exp\left( -\ell\cdot\log\left(\frac{\ell}{enp}\right)\right).
  \]
\end{theorem}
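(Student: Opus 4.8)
The plan is to use the standard exponential-moment (Chernoff) method. Write $X = \Bin(n,p)$ as a sum $X = X_1 + \dots + X_n$ of independent Bernoulli$(p)$ random variables. For every $t > 0$, Markov's inequality applied to the nonnegative variable $e^{tX}$ gives
\[
  \PP\bigl(X \ge a\bigr) = \PP\bigl(e^{tX} \ge e^{ta}\bigr) \le e^{-ta}\,\EE\bigl[e^{tX}\bigr],
\]
and by independence together with the elementary bound $1+x \le e^x$ we get
\[
  \EE\bigl[e^{tX}\bigr] = \bigl(1 - p + pe^t\bigr)^n = \bigl(1 + p(e^t-1)\bigr)^n \le \exp\bigl(np(e^t-1)\bigr),
\]
so that $\PP(X \ge a) \le \exp\bigl(np(e^t-1) - ta\bigr)$ for all $t > 0$.

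To obtain the first displayed inequality, set $a = (1+\alpha)np$ and minimize the exponent $np(e^t-1) - t(1+\alpha)np$ over $t > 0$. Differentiating shows the minimum is attained at $e^t = 1+\alpha$, i.e.\ $t = \log(1+\alpha)$, which is nonnegative since $\alpha \ge 0$. Substituting this value yields
\[
  \PP\bigl(X \ge (1+\alpha)np\bigr) \le \exp\bigl(np\alpha - (1+\alpha)np\log(1+\alpha)\bigr) = \exp\bigl(-np\bigl((1+\alpha)\log(1+\alpha) - \alpha\bigr)\bigr),
\]
which is exactly the first bound (the case $\alpha = 0$ being trivial).

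The second inequality reduces to the purely analytic claim that
\[
  g(\alpha) \coloneqq (1+\alpha)\log(1+\alpha) - \alpha \ \ge\ \frac{\alpha^2}{2(1+\alpha)} \eqqcolon h(\alpha) \qquad \text{for all } \alpha \ge 0.
\]
I will verify this by comparing Taylor data at $0$: one checks $g(0) = h(0) = 0$, then $g'(\alpha) = \log(1+\alpha)$ and $h'(\alpha) = \frac{\alpha(\alpha+2)}{2(1+\alpha)^2}$, so $g'(0) = h'(0) = 0$, and finally $g''(\alpha) = \frac{1}{1+\alpha} \ge \frac{1}{(1+\alpha)^3} = h''(\alpha)$ for $\alpha \ge 0$. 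Integrating this last inequality twice, using the matching values at $\alpha = 0$, gives $g \ge h$ on $[0,\infty)$. This elementary calculus step is the only place where a little care is needed, and it is the ``main obstacle'' only in the most nominal sense — nothing here is deep.

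Finally, for the ``in particular'' clause, put $\ell = (1+\alpha)np$, equivalently $\alpha = \ell/(np) - 1$; the hypothesis $\ell \ge enp$ forces $\alpha \ge e - 1 > 0$, so the first bound applies. Its exponent rewrites as
\[
  -np\bigl((1+\alpha)\log(1+\alpha) - \alpha\bigr) = -\ell\log\frac{\ell}{np} + \ell - np \le -\ell\log\frac{\ell}{np} + \ell = -\ell\log\frac{\ell}{enp},
\]
where we simply discarded the nonnegative term $np$. This gives $\PP(X \ge \ell) \le \exp\bigl(-\ell\log(\ell/(enp))\bigr)$, completing the proof. In summary, the entire argument is bookkeeping around the MGF optimization plus the scalar inequality $g \ge h$; there is no substantive difficulty.
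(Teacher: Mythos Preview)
Your proof is correct in every detail: the MGF optimization gives the first bound, the second-derivative comparison $g''(\alpha)=1/(1+\alpha)\ge 1/(1+\alpha)^3=h''(\alpha)$ together with matching initial data yields the second, and the substitution $\ell=(1+\alpha)np$ followed by dropping the $-np$ term gives the ``in particular'' clause. The paper itself does not prove this statement at all --- it is quoted as a standard tail bound with a reference to \cite[Theorem~2.1]{JRLrandomgraphs} --- so your argument is not so much an alternative route as a self-contained derivation of a result the paper simply imports.
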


In addition, we shall use the following tail estimate for the distance of the cumulative distribution function of a random variable $Y$ from its empirical counterpart determined by a sequence of independent copies of $Y$ due to Dvoretzky, Kiefer, and Wolfowitz~\cite{DvoKieWol56}.
\begin{theorem}[DKW Inequality]
  \label{thm:DKW}
  Suppose that $Y_1, \dotsc, Y_n$ is a sequence of i.i.d.\ real-valued random variables and let $D_n \colon \mathbb{R} \to [0,1]$ be the associated empirical distribution function, i.e.,
  \[
    D_n(y) \coloneqq \frac{1}{n} \sum_{i =1}^n \1_{Y_i \le y}.
  \]
  Then, for every $\eps > 0$,
  \[
    \PP\left(\sup_{y \in \mathbb{R}} \bigl|D_n(y) - \PP(Y_1 \le y)\bigr| > \eps\right) \le 2\exp(-2\eps^2n).
  \]
\end{theorem}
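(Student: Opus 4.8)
The plan is to reduce \Cref{thm:DKW} to a one-dimensional statement about binomial tails and then invoke the sharp one-sided Dvoretzky--Kiefer--Wolfowitz bound for uniform samples.

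\emph{Reduction to the uniform distribution, then to a one-sided statistic.} Put $F(y) \coloneqq \PP(Y_1 \le y)$ and let $F^-(u) \coloneqq \inf\{y : F(y) \ge u\}$ be its generalized inverse. If $U_1, \dots, U_n$ are i.i.d.\ uniform on $[0,1]$, then $(F^-(U_i))_{i=1}^n$ has the same joint law as $(Y_i)_{i=1}^n$, so we may assume $Y_i = F^-(U_i)$. Using the identity $F^-(u) \le y \iff u \le F(y)$, the empirical distribution function becomes $D_n(y) = \tilde E_n(F(y))$, where $\tilde E_n$ is the empirical distribution function of the $U_i$, and therefore
\[
  \sup_{y \in \RR}\bigl|D_n(y) - F(y)\bigr| = \sup_{y \in \RR}\bigl|\tilde E_n(F(y)) - F(y)\bigr| \le \sup_{u \in [0,1]}\bigl|\tilde E_n(u) - u\bigr| \eqqcolon D_n^\ast.
\]
Since $u \mapsto \tilde E_n(u) - u$ is piecewise linear of slope $-1$ with upward jumps of size $1/n$ at the order statistics, $D_n^\ast$ is a maximum of finitely many explicit functions of the $U_i$ (so measurability is not an issue), and it suffices to bound $\PP(D_n^\ast > \eps)$. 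Write $D_n^+ \coloneqq \sup_u(\tilde E_n(u) - u)$ and $D_n^- \coloneqq \sup_u(u - \tilde E_n(u))$, so $D_n^\ast = \max(D_n^+, D_n^-)$. Replacing each $U_i$ by $1 - U_i$ (which preserves the joint law) interchanges $D_n^+$ and $D_n^-$ up to a null event, so they are identically distributed, and a union bound gives $\PP(D_n^\ast > \eps) \le 2\,\PP(D_n^+ > \eps)$. Hence it is enough to prove the one-sided bound $\PP(D_n^+ > \eps) \le \exp(-2\eps^2 n)$.

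\emph{The one-sided bound and the main obstacle.} Evaluating $\tilde E_n(u)-u$ at the right-limits of the order statistics $U_{(1)} \le \dots \le U_{(n)}$ gives $D_n^+ = \max_{0 \le k \le n}(k/n - U_{(k)})$ with $U_{(0)} \coloneqq 0$, so $\{D_n^+ > \eps\}$ is the event that $U_{(k)} < k/n - \eps$ for some $k$ with $n\eps < k \le n$. For a fixed such $k$, the event $\{U_{(k)} < k/n - \eps\}$ is the event that at least $k$ of the $U_i$ fall below $k/n - \eps$, which has the same probability as $\{\Bin(n, k/n - \eps) \ge k\}$, i.e.\ that a binomial variable exceeds its mean $k - n\eps$ by $n\eps$; by Hoeffding's inequality — or, at the cost of a slightly worse exponent, by \Cref{chernoff} — this probability is at most $\exp(-2\eps^2 n)$. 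The difficulty is that a naive union bound over the $\Theta(n)$ admissible values of $k$ loses a spurious factor of $n$, and removing this factor is exactly the hard part (Massart's sharpening of the original DKW argument). One standard way is to condition on the \emph{largest} index $k$ at which the crossing $U_{(k)} < k/n - \eps$ occurs and to estimate the resulting single event (equivalently, to run a first-passage/optional-stopping argument on the reversed empirical process, or to start from the Birnbaum--Tingey closed form for the law of $D_n^+$); this yields $\PP(D_n^+ > \eps) \le \exp(-2\eps^2 n)$ with no prefactor and, combined with the reductions above, proves \Cref{thm:DKW}.

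\emph{A self-contained weaker version.} If one is content with a bound carrying a polynomial-in-$1/\eps$ prefactor — which suffices for every application of \Cref{thm:DKW} in this paper — the delicate last step can be bypassed: take the grid $t_j \coloneqq j\eps/2$ for $0 \le j \le \lceil 2/\eps\rceil$, use the monotonicity of $\tilde E_n$ to deduce $D_n^\ast \le \max_j|\tilde E_n(t_j) - t_j| + \eps/2$, and union-bound \Cref{chernoff} over the $O(1/\eps)$ grid points to obtain a bound of the form $O(\eps^{-1})\exp(-\Omega(\eps^2 n))$.
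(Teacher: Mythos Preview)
The paper does not prove \Cref{thm:DKW} at all; it merely states the inequality and cites Dvoretzky--Kiefer--Wolfowitz (the sharp constant~$2$ is due to Massart). So there is no ``paper's own proof'' to compare against --- your write-up goes strictly beyond what the paper does.

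On the merits of your sketch: the reduction to the uniform case via the quantile transform, the symmetry argument giving $D_n^+ \stackrel{d}{=} D_n^-$, and the identification of $\{D_n^+ > \eps\}$ with a union of binomial-tail events are all standard and correct. However, you do not actually prove the decisive step, namely $\PP(D_n^+ > \eps) \le \exp(-2\eps^2 n)$. You correctly flag that the na\"ive union bound over $k$ costs a factor of~$n$, and you name a route (condition on the last crossing index, or invoke the Birnbaum--Tingey formula and Massart's analysis), but you do not carry it out; the sentence ``this yields $\PP(D_n^+ > \eps) \le \exp(-2\eps^2 n)$'' is an assertion, not an argument. As written, the proposal is a correct reduction to Massart's theorem, not a self-contained proof of the stated bound.

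Your final paragraph, on the other hand, is a genuine self-contained argument for a weaker inequality with an $O(1/\eps)$ prefactor, and that weaker form is indeed all the paper needs: \Cref{thm:DKW} is invoked only in the proof of \Cref{prop:Pprop}\ref{item:P-fibers-empirical-dist}, where $\eps = N^{-1/3}$ and any bound of the shape $\poly(1/\eps)\exp(-\Omega(\eps^2 N))$ is $o(1)$. One small caveat: the paper's \Cref{chernoff} is stated only for the upper tail, so to bound $\PP(\tilde E_n(t_j) < t_j - \eps/2)$ you should either apply it to $\Bin(n, 1-t_j)$ or invoke the standard two-sided Chernoff/Hoeffding bound directly.
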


Finally, we will need the following technical estimate that quantifies the fact that the binomial distribution is tightly concentrated around its mean.

\begin{lemma}
  \label{lemma:binomial-tilt}
  There exists an absolute constant $t_0$ such that the following holds.
  Suppose that a positive integer $n$ and $p \in [0,1]$ satisfy $t \coloneqq np \ge t_0$ and let $T \sim \Bin(n,p)$.
  Then, for every integer $m$ such that $\PP(T \le m+1) \ge 3/5$, we have $\mathbb{E}[T \cdot \1_{T \le m}] \ge 5t/9$.
\end{lemma}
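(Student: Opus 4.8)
I would lower-bound $\mathbb{E}[T\cdot\1_{T\le m}]$ by restricting to a sub-event of $\{T\le m\}$ on which $T$ is still close to its mean $t$ and which carries almost all of the mass of $T$; since the hypothesis provides a probability $\tfrac35$, comfortably above the target ratio $\tfrac59$, there is room for this to work. Write $a\coloneqq m+1$, so $\PP(T\le a)\ge\tfrac35$, and fix a small absolute constant $\delta>0$. Two facts are needed. First, $m$ is not much below $t$: by the (lower-tail) Chernoff bound, $\PP\bigl(T<(1-\delta)t\bigr)\le\exp(-\delta^2 t/2)<\tfrac35$ once $t\ge t_0$, so $\PP(T\le a)\ge\tfrac35$ forces $a>(1-\delta)t$ and hence $m\ge(1-2\delta)t$ for $t$ large. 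Second, the individual point masses of $T$ are tiny — of order $1/\sqrt{np(1-p)}$, hence at most $\tfrac1{100}$ once $t\ge t_0$ — so that $\PP(T\le m)\ge\tfrac35-\PP(T=m+1)\ge\tfrac35-\tfrac1{100}$.

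Granting these, the computation is immediate. By the first fact together with another application of the Chernoff bound,
\[
  \PP\bigl((1-2\delta)t\le T\le m\bigr)\ \ge\ \PP(T\le m)-\PP\bigl(T<(1-2\delta)t\bigr)\ \ge\ \tfrac35-\tfrac1{100}-\exp(-2\delta^2 t)\ \ge\ 0.58
\]
for $t\ge t_0$, and therefore, using $T\ge(1-2\delta)t$ on this event,
\[
  \mathbb{E}[T\cdot\1_{T\le m}]\ \ge\ (1-2\delta)t\cdot\PP\bigl((1-2\delta)t\le T\le m\bigr)\ \ge\ 0.58\,(1-2\delta)\,t\ >\ \tfrac59 t
\]
provided $\delta$ is a small enough absolute constant. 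No separate treatment of large $m$ is required, since the bound on $\PP\bigl((1-2\delta)t\le T\le m\bigr)$ holds for every admissible $m$ (being close to $1$ when $m$ is large). A naive alternative — bounding $\mathbb{E}[T\cdot\1_{T>m}]$ from above using only $\PP(T>m)\le\tfrac25+o(1)$ — is too lossy: it yields $\mathbb{E}[T\cdot\1_{T\le m}]\ge\tfrac59 t-O(\sqrt t)$, just short of the clean constant, which is precisely why one truncates from below at $(1-2\delta)t$ and exploits $\PP(T\le m)\ge\tfrac35-o(1)$ in full.

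The one genuinely non-trivial ingredient is the uniform bound $\PP(T=k)=O\bigl(1/\sqrt{np(1-p)}\bigr)$ — the quantitative version of ``the binomial concentrates around its mean'' that the lemma is meant to package, and the step I expect to take the most care. I would obtain it either from the local central limit theorem for the binomial, or elementarily from unimodality of the binomial pmf together with the fact that it spreads over a window of width $\Theta(\sqrt{np(1-p)})$. The numerology ($\tfrac35$ in the hypothesis against $\tfrac59$ in the conclusion) is tuned so that discarding a single point mass, together with the $(1-2\delta)$ loss from the sandwich, still leaves $\tfrac59 t$; getting this bookkeeping right is the crux.
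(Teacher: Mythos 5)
Your proof is correct and takes a genuinely different route from the paper's. The paper works on the complement: it writes $t - \mathbb{E}[T\1_{T\le m}] = \mathbb{E}[T\1_{T>m}] \le t\cdot\PP(T>m) + \mathbb{E}[(T-t)^+]$, bounds $\PP(T>m)=\PP(T>m+1)+\PP(T=m+1) \le \tfrac25 + O(1/\sqrt t)$ by peeling off the atom at $m+1$ (the same point-mass estimate you invoke), and bounds $\mathbb{E}[(T-t)^+] = t\int_0^\infty\PP\bigl(T\ge(1+x)t\bigr)\,dx = O(\sqrt t)$ via the upper-tail Chernoff inequality, yielding $\mathbb{E}[T\1_{T\le m}]\ge\tfrac35 t - O(\sqrt t)$. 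You instead lower-bound the target expectation directly by restricting to the window $\{(1-2\delta)t\le T\le m\}$, invoking the lower-tail Chernoff inequality twice (once to show $m\ge(1-2\delta)t$, once to show the window carries probability close to $\tfrac35$) together with the same point-mass estimate. Both proofs rest on the local-limit bound $\PP(T=m+1)=O\bigl(1/\sqrt{np(1-p)}\bigr)$ — the one non-elementary step, as you note — and both implicitly require $p$ bounded away from $1$ for that bound to be absolute (the lemma as literally stated fails at $p=1$, where $T=n$ deterministically and $m=n-1$ is admissible; in the application $p\to0$, so this is harmless, but it is a latent hypothesis shared by both arguments). The paper's decomposition is parameter-free and a bit slicker; yours is slightly longer but makes the ``mass concentrates near the mean and just below $m$'' picture explicit.

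One small correction: the ``naive alternative'' you dismiss as ``just short'' is in fact the paper's argument, and it yields $\mathbb{E}[T\1_{T\le m}]\ge\tfrac35 t - O(\sqrt t)$, not $\tfrac59 t - O(\sqrt t)$. Since $\tfrac35>\tfrac59$ with constant slack, that error term is harmless for $t$ large; the lemma's numerology leaves room for an $O(\sqrt t)$ loss rather than exhausting it.
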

\begin{proof}
  Let $m$ be an integer satisfying $\PP(T \le m+1) \ge 3/5$ and observe that
  \[
    t - \mathbb{E}[T \cdot \1_{T \le m}] = \mathbb{E}[T \cdot \1_{T > m}] \le t \cdot \left(\PP(T > m) + \int_{0}^{\infty} \PP\bigl(T \ge (1+x)t\bigr)\, dx\right).
  \]
  Since, for some absolute constant $C$,
  \[
    \PP(T > m) = \PP(T > m+1) + \PP(T = m+1) \le 2/5 + C/\sqrt{t}
  \]
  and, by~\Cref{chernoff},
  \[
    \int_{0}^{\infty} \PP\bigl(T \ge (1+x)t\bigr)\, dx \le \int_{0}^{\infty} \exp\left(-\frac{x^2t}{2(1+x)}\right) \, dx \le \frac{C}{\sqrt{t}},
  \]
  we may conclude that $\mathbb{E}[T \cdot \1_{T \le m}] \ge 5t/9$ whenever $t \ge t_0$ for sufficiently large constant $t_0$.
\end{proof}

\subsection{Geometry of polytopes}\label{subsec:geometry}

We now introduce our notation for polytopes and present several geometric results that will be useful throughout the paper. We refer the reader to \cites{lecturepolytopes, lectures01polytopes} for a comprehensive introduction to convex polytopes and $0/1$-polytopes.

A subset $P\subseteq \RR^n$ is a \emph{polytope} if it is the convex hull of a finite subset of points in $\RR^n$.  We say that $P$ is a \emph{$k$-dimensional} polytope, and write $\dim P=k$, if the affine subspace spanned by $P$ has dimension $k$.  A subset $F\subseteq P$ is a \emph{face} of $P$ if there exists a vector $c\in \RR^n$ and a real number $\gamma \in \RR$ such that $c^\top x\leq \gamma$ for every $x \in P$ and
\begin{align}\label{eq:face}
  F=\{x\in P:\: c^\top x= \gamma\},
\end{align}
i.e., there exists an affine subspace separating $F$ from $P$. It is not difficult to check that a face of $P$ is also a polytope. A face of dimension $\ell$ is called an $\ell$-face.  The $0$-faces of a polytope $P$ are called the \emph{vertices} of $P$, while its $1$-faces are the \emph{edges} of $P$.  Throughout the paper, we will often identify a polytope with its set of vertices, since the latter determines the entire polytope.

Given a polytope $P$, we define its graph $G_P$ as the graph whose vertices are the $0$-faces (vertices) of $P$, and whose edges are the $1$-faces (edges) of $P$.  The following result is standard and can be found in \cite{lecturepolytopes}.

\begin{proposition}[\cite{lecturepolytopes}]\label{prop:ziegler}
  The following holds for every polytope $P$:
  \begin{enumerate}[label=(\roman*)]
  \item
    The graph $G_P$ is connected.
  \item
    \label{item:ziegler-ii}
    If $F\subseteq P$ is a face of $P$, then $G_F=G_P[F]$, i.e., the vertices/edges of $F$ are exactly the vertices/edges of $P$ contained in $F$.
  \end{enumerate}
\end{proposition}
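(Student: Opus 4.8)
This is a classical fact about graphs of polytopes, attributed here to~\cite{lecturepolytopes}, so the shortest route is simply to quote that reference. For a self-contained argument my plan would be to treat the two parts independently: part~(i) by the ``monotone path'' argument from linear programming, and part~(ii) by reducing it to two folklore lemmas about faces.

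For part~(i) I would first fix a linear functional $c\in\RR^n$ taking pairwise distinct values on the finitely many vertices of $P$; this is possible since the forbidden set $\bigcup_{v\neq v'}\{c:c^\top(v-v')=0\}$ is a finite union of hyperplanes. Let $v^\ast$ be the unique vertex maximising $c^\top$. It then suffices to show that every vertex $v$ is joined to $v^\ast$ in $G_P$ by a path along which $c^\top$ strictly increases, since any two vertices are then joined through $v^\ast$. I would prove this by downward induction on the position of $v$ in the $c^\top$-ordering of the vertices: if $v\neq v^\ast$, then $c^\top$ is not maximal at $v$ over $P$, so some $x\in P$ satisfies $c^\top(x-v)>0$; the direction $x-v$ lies in the tangent cone $T_vP=\{d:v+\varepsilon d\in P\text{ for all sufficiently small }\varepsilon>0\}$, which, because $v$ is a vertex, is a pointed polyhedral cone whose extreme rays are exactly the directions of the edges of $P$ emanating from $v$. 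Writing $x-v$ as a nonnegative combination of those ray generators $d_1,\dots,d_k$ and using $c^\top(x-v)>0$ yields $c^\top d_i>0$ for some $i$; the edge of $P$ in direction $d_i$ then leads from $v$ to a vertex $v'$ with $c^\top v'>c^\top v$, and $v'$ is joined to $v^\ast$ by the induction hypothesis. This gives connectivity of $G_P$.

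For part~(ii) I would invoke two standard lemmas: \textbf{(a)} every face of the face $F$ is again a face of $P$, of the same dimension; and \textbf{(b)} $\operatorname{vert}(F)=\operatorname{vert}(P)\cap F$. Granting these: if $e$ is an $\ell$-face of $F$ (take $\ell\in\{0,1\}$), then by~(a) it is an $\ell$-face of $P$ contained in $F$; conversely, if $e$ is an $\ell$-face of $P$ with $e\subseteq F$, pick $c,\gamma$ with $e=\{x\in P:c^\top x=\gamma\}$ and $c^\top x\le\gamma$ throughout $P$, so that $e\subseteq F\subseteq P$ forces $\{x\in F:c^\top x=\gamma\}=e$ while $c^\top x\le\gamma$ still holds on $F$, whence $e$ is an $\ell$-face of $F$. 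Taking $\ell=0$ and $\ell=1$ shows that the vertices and the edges of $F$ are precisely those of $P$ lying inside $F$, i.e.\ $G_F=G_P[F]$. Lemma~(b) is immediate from the definitions, so I expect lemma~(a), together with the tangent-cone description used in part~(i), to be the only non-routine ingredient; the clean way to get~(a) is to observe that if $F$ is the set of maximisers of $a^\top x$ over $P$ and $G$ the set of maximisers of $b^\top x$ over $F$, then $G$ is the set of maximisers of $(ta+b)^\top x$ over $P$ for every sufficiently large $t>0$, and is therefore a face of $P$.
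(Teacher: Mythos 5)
The paper offers no proof of this proposition: it simply states the result as standard and cites Ziegler's \emph{Lectures on Polytopes}. Your self-contained argument is correct and is, in essence, the proof one finds in that reference, so there is no divergence from the paper's (nonexistent) argument to report; instead I will just record why each piece works.

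Part~(i) is the classical ``monotone path'' argument. You choose a generic linear functional $c$, which exists because the excluded set is a finite union of hyperplanes, and you observe that at any vertex $v\neq v^\ast$ the vector $v^\ast-v$ lies in the tangent cone $T_vP$. Because $v$ is a vertex, $T_vP$ is a pointed polyhedral cone whose extreme rays are the edge directions at $v$; decomposing $v^\ast-v$ as a nonnegative combination of these and using $c^\top(v^\ast-v)>0$ forces $c^\top d_i>0$ for some edge direction $d_i$, so following that edge strictly increases $c^\top$, and the (downward) induction on the $c^\top$-rank of $v$ closes. Part~(ii) is correctly reduced to two folklore facts: that a face of a face of $P$ is a face of $P$ of the same dimension (your $ta+b$ device for $t\gg1$ is the standard way to see this, and dimension is preserved because it is the intrinsic dimension of the affine hull), and that an $\ell$-face $e$ of $P$ contained in $F$ is cut out of $F$ by the same supporting hyperplane, since $F\subseteq P$ gives $\{x\in F:c^\top x=\gamma\}\subseteq e$ while $e\subseteq F$ gives the reverse inclusion, and $c^\top\le\gamma$ persists on $F$. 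One small remark: your lemma~(b), $\operatorname{vert}(F)=\operatorname{vert}(P)\cap F$, is just the $\ell=0$ case of the equivalence you then establish, so it need not be listed as a separate ingredient, though including it does no harm.
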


We will also use the following geometric observation, whose proof can be found in~\cite{randomexppolytopes24}.

\begin{proposition}[{\cite[Proposition~6]{randomexppolytopes24}}]\label{prop:leroux}
  If $P\subseteq \RR^n$ is a $d$-dimensional polytope, then for any vertex $v \in P$, the set of edges incident to $v$ is not contained in any $(d-1)$-dimensional affine subspace.
\end{proposition}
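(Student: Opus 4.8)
The plan is to argue by contradiction using the definition of a face via a supporting linear functional. Suppose $v$ is a vertex of the $d$-dimensional polytope $P$ and that every edge of $P$ incident to $v$ lies in some $(d-1)$-dimensional affine subspace $H$. Since $v$ itself lies on each of these edges, we may assume $v \in H$. Write $H = v + W$, where $W \subseteq \RR^n$ is a $(d-1)$-dimensional linear subspace. Because $\dim P = d$ and $v \in P$, the polytope $P - v$ spans a $d$-dimensional linear subspace $V \supseteq W$; pick a vector $u \in V \setminus W$, so that $V = W \oplus \RR u$. The goal is to show that, under this assumption, $v$ cannot actually be a vertex of $P$ — we will produce a point of $P$ that is not separated from $P$ by any functional exposing $\{v\}$, contradicting that $\{v\}$ is a $0$-face.

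First I would pass to the link (vertex figure) of $v$. Concretely, consider the finite set $S$ of all vertices $w$ of $P$ with $w \neq v$, and for each such $w$ let $r_w \coloneqq (w - v)/\|w - v\|$ be the corresponding unit direction. The standard fact I will use (a consequence of \Cref{prop:ziegler}\ref{item:ziegler-ii} applied to the vertex figure, or directly from the theory of polytopes) is that $w$ is joined to $v$ by an edge of $P$ if and only if $r_w$ is a vertex (extreme ray direction) of the cone $C \coloneqq \operatorname{cone}\{w - v : w \in S\}$, equivalently an extreme point of $\operatorname{conv}\{r_w : w \in S\}$ after intersecting with a suitable hyperplane. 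The key point is that $C$ is a full-dimensional cone inside $V$: indeed $C$ spans $V$ because $P$ spans $v + V$ and $v$ is a vertex, so $C$ contains a neighborhood-of-$v$ slice of $P - v$ that affinely spans $V$. Now here is the contradiction: a full-dimensional pointed cone in a $d$-dimensional space $V$ has at least $d$ extreme rays, and — more to the point — its extreme rays are not all contained in any $(d-1)$-dimensional linear subspace of $V$, since their positive hull is all of $V$. But our assumption says precisely that every edge-direction $r_w$ lies in $W = H - v$, a $(d-1)$-dimensional subspace of $V$. Hence $\operatorname{cone}\{r_w : w\ \text{gives an edge}\} \subseteq W$, and since this cone already contains all extreme rays of $C$, we get $C \subseteq W$, contradicting that $C$ spans $V$.

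To make the step ``$C$ spans $V$'' rigorous I would argue as follows. Since $v$ is a vertex of $P$, there is a linear functional $c$ and $\gamma = c^\top v$ with $c^\top x \le \gamma$ on $P$ and $\{x \in P : c^\top x = \gamma\} = \{v\}$; thus $c^\top(w - v) < 0$ for every other vertex $w$, so $0 \notin C$ properly and $C$ is pointed. Taking a small ball $B$ around $v$ inside the affine hull of $P$, the set $P \cap B$ is a neighborhood of $v$ in $P$ that affinely spans $v + V$ (as $\dim P = d$), and $(P \cap B) - v \subseteq C$; hence $\operatorname{span}(C) \supseteq \operatorname{span}((P\cap B) - v) = V$. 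Combined with the previous paragraph, $C$ is a pointed full-dimensional cone in $V$ whose extreme rays all lie in $W$, which forces $\operatorname{span}(C) \subseteq W \subsetneq V$, the desired contradiction.

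The main obstacle — and the only part requiring care — is the cited structural fact that the edges of $P$ at $v$ correspond exactly to the extreme rays of the cone $C$, together with the claim that a pointed full-dimensional cone is spanned by its extreme rays. Both are classical (the first is essentially the statement that the vertex figure $P/v$ is a polytope whose vertices correspond to edges of $P$ through $v$, and the second is the Minkowski–Weyl / Krein–Milman statement for pointed polyhedral cones), and either can be invoked from \cite{lecturepolytopes}; alternatively one can sidestep the vertex-figure language entirely and argue directly: if all edge-directions at $v$ lie in $W$, then the functional $c$ above can be perturbed within the $(d-1)$-dimensional space of functionals vanishing on $W$ to still expose only $\{v\}$, yet one also needs a functional not vanishing on $W$ to ``see'' the $d$-th dimension of $P$ at $v$ — which is exactly the edge in the $u$-direction that must exist since $v$ has degree at least $d$ in $G_P$ (as $G_P$ is connected and $P$ is $d$-dimensional, indeed every vertex of a $d$-polytope has degree at least $d$). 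I would present whichever of these two routes is shortest given the tools already available in the paper, likely the cone/vertex-figure argument since \cite{lecturepolytopes} is already cited for exactly this circle of ideas.
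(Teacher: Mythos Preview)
The paper does not contain its own proof of this proposition: it is quoted from \cite{randomexppolytopes24} with the remark ``whose proof can be found in~\cite{randomexppolytopes24}'', and no argument is given in the present paper. So there is nothing to compare your proposal against here.

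That said, your main line of argument is correct and is essentially the standard one: the edge-directions at $v$ are exactly the extreme rays of the pointed polyhedral cone $C=\operatorname{cone}(P-v)$; this cone is $d$-dimensional because $P$ is, and a pointed polyhedral cone is the positive hull of its extreme rays (Minkowski--Weyl), so those rays cannot all lie in a $(d-1)$-dimensional subspace. Two small comments. First, your justification that $C$ spans $V$ via ``a small ball around $v$'' is more elaborate than needed: $C$ contains $P-v$, which already spans $V$ by definition. Second, the alternative route you sketch at the end is muddled: the claim that every vertex of a $d$-polytope has degree at least $d$ is a \emph{consequence} of the proposition you are trying to prove, not an independent input, and the ``perturb $c$ within the functionals vanishing on $W$'' idea is not a complete argument as stated. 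I would drop that paragraph and keep the clean cone argument.
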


Recall that $Q^n$ is the $n$-dimensional hypercube with vertex set $\{0,1\}^n$. In this paper, we will often be interested in analyzing the edges of a polytope $P \subseteq Q^n$. It is not hard to check that $G_{Q^n}[P] \subseteq G_P$, i.e., an edge of the hypercube $Q^n$ whose both endpoints are vertices of $P$ is also an edge of $P$. However, unlike in \Cref{prop:ziegler} \ref{item:ziegler-ii}, there may be new edges in $G_P$ that were not previously in $G_{Q^n}$. We conclude this section by giving a sufficient condition for the existence of such new edges.

\begin{proposition}\label{prop:geometry}
  Given integers $1\leq k\leq n$, let $P\subseteq Q^n$ be a polytope, let $F\subseteq Q^n$ be a $k$-face of $Q^n$, and let $x,y \in P\cap F$ be two vertices at Euclidean distance $\sqrt{k}$.  If there is a $(k-1)$-face $F' \subseteq F$ such that $P \cap F' = \{x\}$, then $\{x,y\}\in G_P$.
\end{proposition}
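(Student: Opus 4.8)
The plan is to pass to the face $R \coloneqq P \cap F$ of $P$ and then produce a single linear functional witnessing that $\{x,y\}$ is a one-dimensional face of $R$. The first observation is that $R$ really is a face of $P$: since $F$ is a face of the cube $Q^n = [0,1]^n$, say $F = \{z \in [0,1]^n : c_F^\top z = \gamma_F\}$ with $c_F^\top z \le \gamma_F$ on $[0,1]^n$, and since $P \subseteq [0,1]^n$, the same inequality holds on $P$ and $R = \{z \in P : c_F^\top z = \gamma_F\}$. By \Cref{prop:ziegler}(ii) we then have $G_R = G_P[R]$, so, as $x,y \in R$, it suffices to show that $\{x,y\}$ is an edge of $R$. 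Note that the vertices of $R$ are exactly the vertices of $P$ lying in $F$; in particular $x$ and $y$ are distinct vertices of $R$, and every vertex of $R$ is a $0/1$ point of $F$.

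Next I would normalize. The symmetries of $Q^n$ generated by coordinate permutations and the flips $z_i \mapsto 1-z_i$ are affine isometries of $\RR^n$ that map $Q^n$ onto itself, $0/1$-polytopes to $0/1$-polytopes, and faces of $Q^n$ to faces of the same dimension, so after applying a suitable one we may assume $F = \{z \in [0,1]^n : z_{k+1} = \dots = z_n = 0\}$, that the facet $F'$ of $F$ equals $\{z \in F : z_1 = 0\}$, and that $x = \mathbf{0}$. Since $x$ and $y$ both lie in $F$ and are at Euclidean distance $\sqrt{k}$, they differ in each of the $k$ free coordinates, so $y = e_1 + \dots + e_k$. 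The hypothesis $P \cap F' = \{x\}$ now reads $R \cap F' = \{\mathbf{0}\}$, and since a $0/1$ point of $F$ whose first coordinate is $0$ automatically lies in $F'$, it follows that \emph{every vertex $v \neq \mathbf{0}$ of $R$ has $v_1 = 1$}.

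Finally, take $c \in \RR^n$ with $c_1 = -(k-1)$, $c_i = 1$ for $2 \le i \le k$, and $c_i = 0$ for $i > k$. Every point of $R$ lies in $F$, so on $R$ we have $c^\top z = -(k-1)z_1 + z_2 + \dots + z_k$; this equals $0$ at $x = \mathbf{0}$, while at any vertex $v \neq \mathbf{0}$ of $R$ — where $v_1 = 1$, $v_i \in \{0,1\}$ for $2 \le i \le k$, and $v_i = 0$ for $i > k$ — it equals $-(k-1) + \sum_{i=2}^k v_i \le 0$, with equality precisely when $v = e_1 + \dots + e_k = y$. Hence $c^\top z \le 0$ throughout $R$ and the vertices of $R$ attaining this maximum are exactly $x$ and $y$, so the face $\{z \in R : c^\top z = 0\}$ equals $\mathrm{conv}\{x,y\}$ — a one-dimensional face, i.e.\ an edge, of $R$ — and therefore an edge of $P$. (When $k = 1$ the functional $c$ vanishes and the argument degenerates to the trivial remark that the one-dimensional polytope $R = \mathrm{conv}\{x,y\}$ is its own unique edge.)

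The one genuinely inventive step is the choice of $c$; its validity rests entirely on the hypothesis $P \cap F' = \{x\}$, which is exactly what rules out the cube vertex $e_1$ of $F$, the unique point of $\{0,1\}^k$ other than $\mathbf{0}$ and $y$ at which the displayed linear form would be positive (and this also explains why $F'$ must be a facet of $F$: if $F'$ had higher codimension the statement would be false). Beyond this, I expect only routine bookkeeping: checking that the normalization in the second paragraph can be carried out while preserving the prescribed forms of $F$ and $F'$, and invoking the standard fact that the set of maximizers of a linear functional over a polytope is a face equal to the convex hull of the vertices attaining the maximum.
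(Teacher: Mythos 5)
Your proof is correct and takes essentially the same approach as the paper: construct a linear functional that vanishes at $x$ and $y$ and is strictly negative at every other vertex, using the hypothesis $P \cap F' = \{x\}$ to rule out the one troublesome cube vertex. The only cosmetic difference is that you first restrict to the face $R = P\cap F$ via \Cref{prop:ziegler}\ref{item:ziegler-ii}, which lets you drop the paper's large-negative coefficients ($-n$) on the coordinates outside $F$; the core functional is the same up to relabeling.
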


\begin{proof}
  Suppose without loss of generality that $x$ is the zero vector and $y=(y_1, \dotsc, y_n)$, where $y_i=1$ for $1\leq i \leq k$ and $y_i=0$ for $k+1\leq i \leq n$, and that the $(k-1)$-face $F'$ is described by
  \begin{align*}
    F' = \bigl\{(z_1,\ldots,z_n):\: z_i=0 \text{ for } k\leq i \leq n\bigr\}.
  \end{align*}
  Let $c=(c_1,\ldots,c_n)$ be the vector given by $c_i \coloneqq 1$ if $1\leq i \leq k-1$, $c_k \coloneqq -(k-1)$, and $c_i \coloneqq -n$ for $k+1\leq i \leq n$.
  One can easily check that $c^Tx=c^Ty=0$ and that $c^Tz<0$ for all $z \in Q^n \setminus (F' \cup \{y\}) \supseteq P \setminus \{x, y\}$.
  In other words, $\{z : c^Tz=0\}$ is a hyperplane separating $\{x, y\}$ from the rest of the polytope and thus $\{x,y\}$ is an edge of $P$. 
\end{proof}

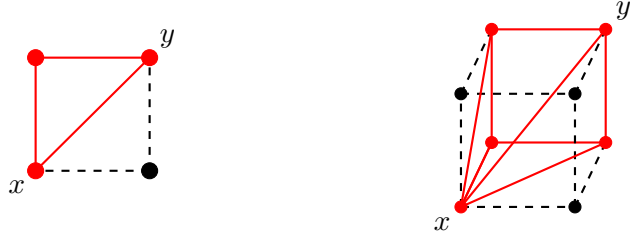
\begin{figure}[h]
  \centering
  {\hfil \begin{tikzpicture}[scale=1.5, x={(1,0,0)}, y={(0,1,0)}, z={(0,-0.3,0.7)}]
      \coordinate (000) at (4,0.25,0);
      \coordinate (100) at (5,0.25,0);
      \coordinate (010) at (4,1.25,0);
      \coordinate (110) at (5,1.25,0);
      \coordinate (001) at (4,0.25,1);
      \coordinate (101) at (5,0.25,1);
      \coordinate (011) at (4,1.25,1);
      \coordinate (111) at (5,1.25,1);

      \draw[thick,red] (0,0) -- (1,1)--(0,1)--cycle;
      \draw[dashed, thick] (1,0) -- (0,0);
      \draw[dashed, thick] (1,0) -- (1,1);

      \filldraw[red] (0,0) circle (2pt);
      \node[below left, black] at (0,0) {$x$};
      \filldraw[red] (0,1) circle (2pt);
      \filldraw[red] (1,1) circle (2pt);
      \node[above right, black] at (1,1) {$y$};
      \filldraw[black] (1,0) circle (2pt);

      \draw[thick, red] (000) -- (100);
      \draw[thick, red] (000) -- (010);
      \draw[thick, red] (000) -- (001);
      \draw[dashed, thick] (100) -- (101);
      \draw[dashed, thick] (010) -- (011);
      \draw[dashed,thick] (001) -- (101);
      \draw[dashed, thick] (001) -- (011);
      \draw[dashed, thick] (101) -- (111);
      \draw[dashed, thick] (011) -- (111);
      \draw[thick, red] (010) -- (110);
      \draw[thick, red] (100) -- (110);
      \draw[dashed, thick] (110) -- (111);
      \draw[thick, red] (001) -- (000);
      \draw[thick, red] (001) -- (010);
      \draw[thick, red] (001) -- (100);
      \draw[thick, red] (001) -- (110);

      \filldraw[red] (000) circle (1.5pt);
      \filldraw[red] (100) circle (1.5pt);
      \filldraw[red] (010) circle (1.5pt);
      \filldraw[red] (110) circle (1.5pt);
      \node[above right, black] at (110) {$y$};
      \filldraw[black] (111) circle (1.5pt) ;
      \filldraw[red] (001) circle (1.5pt);
      \node[below left, black] at (001) {$x$};
      \filldraw[black] (101) circle (1.5pt);
      \filldraw[black] (011) circle (1.5pt);

    \end{tikzpicture}\hfil}
  \caption{An example of \Cref{prop:geometry} for $2$-faces and $3$-faces. The vertices in red are the vertices in $P$. The face $F'$ is the face containing $x$ and all black vertices.}
  \label{fig:fig1}
\end{figure}

\subsection{Projection lemma}\label{subsec:projection}

We devote this subsection to proving our projection lemma. Before stating the lemma, we introduce some notation. Let $d<n$ be integers. Consider the map $\pi \colon Q^n\rightarrow Q^d$ projecting the points of $Q_n$ onto the first $d$ coordinates.
For a $0/1$-polytope $P\subseteq Q^n$, let $R \coloneqq \pi(P)$ be the projection of $P$ on $Q^d$. Furthermore, for every $x \in Q^d$, let $P_x \coloneqq \pi^{-1}(\{x\}) \cap P$ be the fiber of $x$ in $P$.
Given a set $A\subseteq P$, write $A^{\mathsf{c}} \coloneqq P \setminus A$ and define
\begin{equation}\label{eq:BUM}
  \begin{split}
    B\coloneqq \pi(A)=\{x\in Q^d:\: & P_x \cap A \neq \emptyset\}, \\
    U = U_{\pi}(A)\coloneqq\{x\in Q^d:\: & P_x \subseteq A\}, \\
    M = M_{\pi}(A)\coloneqq\{x\in Q^d:\: & P_x \cap A\neq \emptyset \text{ and } P_x \cap A^\mathsf{c}\neq \emptyset\}.
  \end{split}
\end{equation}

In other words, the set $B$ is the projection of $A$ in $R$, the set $U$ is the subset of vertices of $R$ where all the elements projected from $P$ are in $A$, and $M$ is the subset of vertices of $R$ that contain elements projected from both $A$ and $A^{\mathsf{c}}$. Note that $M = B \setminus U$. We are now able to state the lemma.

\begin{lemma}\label{lem:projection}
  Let $A \subseteq P$, and let $R$, $B$, and $M$ be the sets defined above.
  We have
  \[
    e_{G_P}(A,A^{\mathsf{c}})\geq \max\bigl\{|M| ,\, e_{G_R}(B,B^\mathsf{c})\bigr\}.
  \]
\end{lemma}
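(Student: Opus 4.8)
The plan is to establish the two lower bounds $e_{G_P}(A,A^{\mathsf c}) \ge |M|$ and $e_{G_P}(A,A^{\mathsf c}) \ge e_{G_R}(B,B^{\mathsf c})$ separately, since the maximum of two quantities is bounded below by a number as soon as that number bounds each of them.

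For the bound $e_{G_P}(A,A^{\mathsf c}) \ge |M|$, I would argue that each fiber $P_x$ with $x \in M$ contributes at least one edge of $G_P$ crossing between $A$ and $A^{\mathsf c}$, and that these edges are distinct for distinct $x$. Fix $x \in M$. By definition $P_x \cap A \ne \emptyset$ and $P_x \cap A^{\mathsf c} \ne \emptyset$. The key geometric fact is that the fiber $P_x = P \cap \pi^{-1}(\{x\})$ is a face of $P$: indeed $\pi^{-1}(\{x\})$ is the intersection of $P$ with an affine subspace (the first $d$ coordinates fixed to $x$) that is exposed by a supporting hyperplane of $P$ — concretely, taking $c$ to be $\pm 1$ on the first $d$ coordinates according to the bits of $x$ and $0$ elsewhere gives a linear functional maximized exactly on $\pi^{-1}(\{x\}) \cap P$. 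Since $P_x$ is a face, \Cref{prop:ziegler}\ref{item:ziegler-ii} gives $G_{P_x} = G_P[P_x]$, and \Cref{prop:ziegler}(i) tells us $G_{P_x}$ is connected. A connected graph with vertices in both $A$ and $A^{\mathsf c}$ has at least one edge between the two parts; that edge lies in $G_P[P_x] \subseteq G_P$ and crosses $(A, A^{\mathsf c})$. Because fibers over distinct $x$ are vertex-disjoint, the edges obtained this way are pairwise distinct, yielding $e_{G_P}(A, A^{\mathsf c}) \ge |M|$.

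For the bound $e_{G_P}(A,A^{\mathsf c}) \ge e_{G_R}(B, B^{\mathsf c})$, the idea is to lift each crossing edge of $G_R$ between $B$ and $B^{\mathsf c}$ to a crossing edge of $G_P$ between $A$ and $A^{\mathsf c}$, injectively. Let $\{u, w\}$ be an edge of $G_R$ with $u \in B$ and $w \in B^{\mathsf c} = R \setminus B$. Being an edge (a $1$-face) of $R$, it is exposed by some $c \in \RR^d$ with $c^\top z \le \gamma$ on $R$ and equality exactly on the segment $uw$. Pull $c$ back to $\RR^n$ by appending zeros, so $\tilde c^\top v = c^\top \pi(v)$; then $\tilde c$ is maximized on $P$ exactly over $P_u \cup P_w = \pi^{-1}(\{u, w\}) \cap P$, which is therefore a face $F$ of $P$. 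Now $F$ contains a vertex of $A$ (any vertex of $P$ in $P_u$, which is nonempty and contained in $A$-touching fiber — and since $G_F = G_P[F]$ is connected, it in fact contains a vertex of $G_P$ in $P_u$, hence in $A$ because $u \in B$ means $P_u \cap A \ne \emptyset$… here I must be slightly careful: $u \in B$ only guarantees $P_u$ meets $A$, not that every vertex of $F$ in $P_u$ is in $A$) and contains no vertex of $A$ in $P_w$ (since $w \notin B$ forces $P_w \cap A = \emptyset$). So $F$ has at least one vertex in $A$ (in the $P_u$ part) and at least one vertex in $A^{\mathsf c}$ (the whole $P_w$ part is in $A^{\mathsf c}$, and $P_w \ne \emptyset$ since $w \in R = \pi(P)$). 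As $G_F$ is connected, it has an edge crossing $(A, A^{\mathsf c})$; this is an edge of $G_P$. To get injectivity, I would show this crossing edge, say $e$, determines the pair $\{u, w\}$: apply $\pi$ to the two endpoints of $e$. The endpoint in $A$ lies in $P_u$ (it is a vertex of $F = G_P[P_u \cup P_w]$ lying in $A$, and $P_w \subseteq A^{\mathsf c}$ forces it into $P_u$), so it projects to $u$; the endpoint in $A^{\mathsf c}$ projects to $u$ or $w$. If both project to $u$, then $e \subseteq P_u$; but then running the argument shows we could have an issue — to avoid this, note that instead of picking an arbitrary crossing edge of $G_F$, I can first observe that since $P_w \subseteq A^{\mathsf c}$ and $P_w \ne \emptyset$, and $P_u$ contains a vertex of $A$, there is a crossing edge with one endpoint projecting to $u$ and the other to $w$: take a shortest path in the connected graph $G_F$ from a vertex of $P_w$ to a vertex of $A \cap P_u$; the last edge before entering $P_u$-side-of-$A$ works, but cleanest is to select, among all crossing edges of $G_F$, one not contained in a single fiber — such exists because $P_u$-vertices-in-$A$ and $P_w$ are connected in $G_F$ and $P_w \subseteq A^{\mathsf c}$ while at least one $P_u$ vertex is in $A$. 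Then $\{u,w\} = \{\pi(\text{endpoints of } e)\}$, so the map from edges of $G_R[B, B^{\mathsf c}]$ to crossing edges of $G_P$ is injective, giving $e_{G_P}(A, A^{\mathsf c}) \ge e_{G_R}(B, B^{\mathsf c})$.

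The main obstacle I anticipate is the injectivity / fiber-containment bookkeeping in the second bound: ensuring that the chosen crossing edge $e$ inside the face $F = G_P[P_u \cup P_w]$ has its two endpoints projecting to the two \emph{distinct} vertices $u$ and $w$, rather than both landing in $P_u$. The clean fix is to exploit $P_w \subseteq A^{\mathsf c}$: since $G_F$ is connected and meets $A$ (inside $P_u$) and meets $A^{\mathsf c}$ at every vertex of $P_w$, one can always find a crossing edge with an endpoint in $P_w$ — e.g., by taking a path in $G_F$ from an $A$-vertex of $P_u$ to a vertex of $P_w$ and looking at the first edge leaving $A$; if that edge lies inside $P_u$ we continue, but since the path eventually reaches $P_w \subseteq A^{\mathsf c}$, the edge on which it crosses into $P_w$ (or the first crossing edge after which the path stays in $A^{\mathsf c}$ all the way to $P_w$) can be taken to have one endpoint in $P_w$ — and then $\pi$ recovers $\{u, w\}$. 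Making this selection precise is the only genuinely delicate point; everything else reduces to the two standard facts from \Cref{prop:ziegler} together with the observation that preimages under $\pi$ of faces of $R$ are faces of $P$.
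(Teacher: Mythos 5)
Your first inequality, $e_{G_P}(A,A^{\mathsf c})\ge|M|$, is correct and matches the paper: the fiber $P_x$ is a face of $P$, so $G_{P_x}=G_P[P_x]$ is connected, and for $x\in M$ it meets both $A$ and $A^{\mathsf c}$, forcing a crossing edge; disjointness of fibers gives distinctness.

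Your second inequality has a genuine gap. You want, for every edge $\{u,w\}\in E_{G_R}(B,B^{\mathsf c})$, a crossing edge of $G_F$ (where $F=\pi^{-1}(\{u,w\})\cap P$) with one endpoint in $P_u\cap A$ and the other in $P_w$, so that $\pi$ recovers $\{u,w\}$. You assert ``among all crossing edges of $G_F$, one not contained in a single fiber --- such exists because $P_u$-vertices-in-$A$ and $P_w$ are connected in $G_F$.'' Graph-theoretic connectedness does not imply this. Consider the abstract configuration $P_u=\{a,b\}$, $P_w=\{c\}$, with $a\in A$, $b,c\in A^{\mathsf c}$, and the only edges of $G_F$ being $\{a,b\}$ and $\{b,c\}$. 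Here $G_F$ is connected and joins $a\in P_u\cap A$ to $c\in P_w$, yet the only $(A,A^{\mathsf c})$-crossing edge is $\{a,b\}$, which lies entirely inside $P_u$; the only $(P_u,P_w)$-edge is $\{b,c\}$, which does not cross. So your shortest-path selection rule has nothing to select. The paper closes exactly this gap with a geometric, not graph-theoretic, fact: \Cref{prop:leroux} applied to the face $F=P_{uw}$ (whose dimension strictly exceeds both $\dim P_u$ and $\dim P_w$) shows that \emph{every} vertex of $P_u$, in particular any $x'\in P_u\cap A$, has a $G_F$-neighbor outside the affine hull of $P_u$, hence in $P_w\subseteq A^{\mathsf c}$. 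That rules out the pathological configuration above and directly produces the desired inter-fiber crossing edge. You need some such dimension/exposure argument; connectedness alone cannot do it.
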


We remark that \Cref{lem:projection} is similar to the projection lemma in \cite{randomexppolytopes24}. However, because $R$ is not necessarily the hypercube $Q^d$, our statement is slightly more technical. We start with the following geometric observation about projections. 

\begin{claim}\label{clm:face}
  If $F\subseteq R$ is a face of $R$, then the set $\pi^{-1}(F)\cap P$ is a face of $P$.
\end{claim}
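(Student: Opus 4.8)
The plan is to produce the supporting functional for $\pi^{-1}(F)\cap P$ by lifting the one that witnesses that $F$ is a face of $R$. Recall first that $R=\pi(P)$ is itself a polytope (the image of the polytope $P$ under the linear projection $\pi$, extended to all of $\RR^n$), so the notion of a face of $R$ makes sense. Since $F$ is a face of $R$, there are $c\in\RR^d$ and $\gamma\in\RR$ with $c^\top x\le\gamma$ for all $x\in R$ and $F=\{x\in R:\ c^\top x=\gamma\}$. I would then set $\tilde c\coloneqq(c_1,\dots,c_d,0,\dots,0)\in\RR^n$, so that $\tilde c^\top y=c^\top\pi(y)$ for every $y\in\RR^n$.

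Next I would verify the two requirements in the definition of a face of $P$ for the pair $(\tilde c,\gamma)$. For the inequality: every $y\in P$ satisfies $\pi(y)\in\pi(P)=R$, hence $\tilde c^\top y=c^\top\pi(y)\le\gamma$. For the equality set: if $y\in P$ then $\pi(y)$ automatically lies in $R$, so $\pi(y)\in F$ if and only if $c^\top\pi(y)=\gamma$, i.e.\ if and only if $\tilde c^\top y=\gamma$; therefore
\[
  \{y\in P:\ \tilde c^\top y=\gamma\}=\{y\in P:\ \pi(y)\in F\}=\pi^{-1}(F)\cap P .
\]
This exhibits $\pi^{-1}(F)\cap P$ as a face of $P$, which is exactly the assertion of the claim.

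I do not expect a genuine obstacle here: the argument is a direct verification. The only point worth a moment's attention is that the implicit constraint ``$x\in R$'' appearing in the description of $F$ becomes vacuous once we restrict to points of the form $\pi(y)$ with $y\in P$, which is precisely why a single linear equation suffices to cut out $\pi^{-1}(F)\cap P$ inside $P$. (In particular the improper faces cause no trouble: $F=R$ yields $\pi^{-1}(F)\cap P=P$ and $F=\emptyset$ yields $\pi^{-1}(F)\cap P=\emptyset$.)
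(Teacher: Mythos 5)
Your proof is correct and takes the same approach as the paper's: lift the supporting functional $c$ for $F$ to $\tilde{c}=(c,\vec{0})\in\RR^n$, and check the inequality and equality conditions directly. The reasoning matches the paper's argument step for step.
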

\begin{proof}
  Since $F$ is a face of $R$, by definition, there exists a vector $c\in \RR^d$ and $\gamma \in \RR$ such that $c^Tx\leq \gamma$ for every $x\in R$, with equality holding if and only if $x\in F$.
  Let $\tilde{c} \coloneqq (c,\vec{0}) \in \RR^n$, where $\vec{0}$ is the zero vector in $\RR^{n-d}$ and let $z \in P$ be arbitrary.
  We have $\tilde{c}^T z = c^T \pi(z) \le \gamma$ and equality holds if and only if $\pi(z) \in F$, that is, if and only if $z \in \pi^{-1}(F)$.  This implies that $\pi^{-1}(F) \cap P$ is a face of $P$.
\end{proof}

\begin{proof}[Proof of \Cref{lem:projection}]
  We prove the lemma by showing the two estimates separately.
  To see the first inequality, note that \Cref{clm:face} implies that $P_x$ is a face of $P$ for every $x\in R$.
  Further, \Cref{prop:ziegler} asserts that $G_{P_x} = G_P[P_x]$, and thus $G_{P_x} \subseteq G_P$.
  As for every $x\in M$, the face $P_x$ contains vertices from both $A$ and $A^{\mathsf{c}}$, the connectedness of $G_{P_x}$, asserted by \Cref{prop:ziegler}, implies that $G_{P_x}$, and thus also $G_P$, must contain an edge between $A$ and $A^{\mathsf{c}}$.
  Finally, since the graphs $G_{P_x}$, with $x \in R$, are pairwise disjoint, we conclude that $e_{G_P}(A,A^{\mathsf{c}}) \geq |M|$.

  For the second inequality, we will show that, for every edge $\{x,y\} \in G_R$ with $x\in B$ and $y\in B^{\mathsf{c}}$, there is a corresponding edge $\{x', y'\} \in G_P$ with $x' \in P_x \cap A$ and $y' \in P_y \cap A^{\mathsf{c}}$.
  By \Cref{clm:face}, the set $P_{xy} \coloneqq \pi^{-1}(\{x,y\}) \cap P$ is a face of $P$ and $G_{P_{xy}} \subseteq G_P$, by \Cref{prop:ziegler}.
  Moreover, $P_{xy} = P_x \cup P_y$ and $\dim{P_x}, \dim{P_y} < \dim{P_{xy}}$.
  Hence, by \Cref{prop:leroux}, each vertex of $P_y$ has at least one neighbor in $P_x$ in the graph $G_{P_{xy}}$.
  Since $P_x \cap A \neq \emptyset$ and $P_y \subseteq A^{\mathsf{c}}$, we may let $x' \in P_x \cap A$ be arbitrary and $y'$ be one of its neighbors in $P_y$.
  Finally, since the bipartite graphs $G_P[P_x, P_y]$, with $x, y \in R$, are pairwise disjoint, we may conclude that $e_{G_P}(A, A^{\mathsf{c}}) \ge e_{G_R}(B, B^{\mathsf{c}})$.
\end{proof}

\section{The sparse case}\label{sec:sparse}

We now turn our attention to the proof of \Cref{prop:sparse}. First, we remind the reader of the following natural representation of set systems as polytopes:  To each set $A \subseteq [n]$, we associate the characteristic vertex $\1_A \in Q^n$ given by
\[
  \1_A(i) =
  \begin{cases}
    1, &\quad \text{if $i\in A$}\\
    0, &\quad \text{if $i\notin A$}.
  \end{cases}
\]
More generally, given a set system $\Omega \subseteq 2^{[n]}$, we define the characteristic polytope of $\Omega$ by $P_{\Omega}\coloneqq\{\1_A: A\in \Omega\}$. The main ingredient of the proof of \Cref{prop:sparse} is the following observation.

\begin{lemma}\label{clm:complete}
  If $\Omega \subseteq 2^{[n]}$ is a set system with the property that $C \nsubseteq A\cup B$ for all distinct $A,B,C \in \Omega$, then the graph $G_{P_\Omega}$ of the polytope $P_{\Omega}$ is complete. 
\end{lemma}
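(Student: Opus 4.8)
The plan is to prove the (formally stronger) statement that for \emph{every} pair of distinct sets $A,B\in\Omega$, the segment $[\1_A,\1_B]$ is a $1$-face of $P_\Omega$; since $A,B$ are arbitrary, this immediately yields that $G_{P_\Omega}$ is complete. The strategy is to exhibit a face of $P_\Omega$ whose vertex set is precisely $\{\1_A,\1_B\}$, and the hypothesis on $\Omega$ will be used exactly once, to rule out any third vertex lying in that face.

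First I would introduce the subcube
\[
  F \coloneqq \bigl\{z\in Q^n:\ z_i=1 \text{ for } i\in A\cap B,\ z_i=0 \text{ for } i\notin A\cup B\bigr\},
\]
which has dimension $|A\triangle B|\ge 1$ (as $A\ne B$). This $F$ is a face of $Q^n$: the linear functional $c$ with $c_i=1$ for $i\in A\cap B$, $c_i=-1$ for $i\notin A\cup B$, and $c_i=0$ for $i\in A\triangle B$ satisfies $c^\top z\le |A\cap B|$ for all $z\in Q^n$, with equality if and only if $z\in F$. Restricting the same inequality to $P_\Omega\subseteq Q^n$ shows that $G\coloneqq F\cap P_\Omega$ is a face of $P_\Omega$.

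Next I would determine which vertices of $P_\Omega$ lie in $F$. For $C\in\Omega$ one has $\1_C\in F$ if and only if $A\cap B\subseteq C\subseteq A\cup B$. This holds for $C=A$ and $C=B$; and it can hold for no other $C\in\Omega$, since then $A,B,C$ would be three distinct members of $\Omega$ with $C\subseteq A\cup B$, contradicting the assumption. By \Cref{prop:ziegler} \ref{item:ziegler-ii}, the vertices of the face $G$ are exactly the vertices of $P_\Omega$ contained in $F$, namely $\1_A$ and $\1_B$. A polytope with exactly two vertices is a segment, hence a $1$-dimensional face, so $[\1_A,\1_B]$ is an edge of $P_\Omega$, as desired.

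I do not expect a genuine obstacle here: the only two things to check are that $F$ is a face of $Q^n$ (handled by the explicit functional above) and the combinatorial fact that no third set of $\Omega$ fits inside $A\cup B$, which is literally the hypothesis. If one prefers to avoid the ``a two-vertex polytope is an edge'' step, the same computation feeds directly into \Cref{prop:geometry}: take the $k$-face $F$ with $k=|A\triangle B|$, the vertices $x=\1_A$ and $y=\1_B$ (antipodal in $F$, hence at Euclidean distance $\sqrt{k}$), and any facet $F'$ of $F$ containing $x$ but not $y$; the analysis above shows $P_\Omega\cap F'=\{x\}$, so $\{\1_A,\1_B\}\in G_{P_\Omega}$.
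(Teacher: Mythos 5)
Your proof is correct and takes essentially the same approach as the paper: both exhibit a supporting linear functional on $P_\Omega$ whose maximizing face contains exactly $\1_A$ and $\1_B$, and both use the hypothesis in the identical place, to rule out any third vertex $\1_C$ with $C\subseteq A\cup B$. The only cosmetic difference is that the paper uses $c=-\1_{(A\cup B)^{\mathsf c}}$ directly, whereas you route through the minimal subcube face of $Q^n$ containing $\1_A,\1_B$ (enforcing the extra, unnecessary constraint $A\cap B\subseteq C$) and then apply \Cref{prop:ziegler}\ref{item:ziegler-ii}.
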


\begin{proof}
  Suppose that $\Omega$ is a set system satisfying the hypothesis of the lemma.
  Since the statement trivially holds if $|\Omega| \le 1$, we may assume that $\Omega$ has at least two elements.
  Let $A,B \in \Omega$ be any two distinct elements and let $c \coloneqq -\1_{(A \cup B)^{\mathsf{c}}}$.
  The definition of $c$ yields $c^\top \1_A = c^\top\1_B = 0$ while $c^T \1_C < 0$ for every $C \in \Omega \setminus \{A,B\}$, as our hypothesis implies that $C \cap (A \cup B)^{\mathsf{c}} \neq \emptyset$.
  In other words, the hyperplane $\{x : c^Tx = 0\}$ separates $\{\1_A, \1_B\}$ from the remainder of $P_\Omega$.
  This means that $\1_A$ and $\1_B$ form an edge of $P_\Omega$.
  Since $A$ and $B$ were chosen arbitrarily, $G_{P_\Omega}$ is complete.
\end{proof}

The proof of \Cref{prop:sparse} now follows from a first-moment argument.

\begin{proof}[Proof of \Cref{prop:sparse}]
  Let $\Omega \subseteq 2^{[n]}$ be a random set system obtained by selecting every set $A \in 2^{[n]}$ independently with probability $p$; clearly, $P_\Omega \sim P_{n,p}$.
  In view of \Cref{clm:complete}, we just need to prove that, with high probability, the random set system $\Omega$ has the property that $C \nsubseteq A\cup B$ for all distinct $A, B, C \in \Omega$. 

  To this end, define
  \[
    \cT \coloneqq \{(A,B,C) \in (2^{[n]})^3:\: C \subseteq A\cup B\}
  \]
  and note that $|\cT| = 7^n$ as $(A, B, C) \in \cT$ if and only if $\bigl(\1_A(i), \1_B(i), \1_C(i)\bigr) \neq (0,0,1)$ for all $i \in [n]$.
  Hence, the expected number of triples in $\cT\cap \Omega^3$ satisfies
  \[
    \EE\left(|\cT\cap \Omega^3|\right)=p^3|\cT|=p^37^n=o(1),
  \]
  by our assumption on $p$.
  By Markov's inequality, the set system $\Omega$ has no triple $(A,B,C) \in \cT$, that is, $\Omega$ satisfies the hypothesis of \Cref{clm:complete}, with high probability.
\end{proof}

\section{The very dense case}\label{sec:verydense}

The main goal of this section is to prove that very dense random $0/1$-polytopes typically have graphs with constant edge expansion. More precisely, we will show the following statement.

\begin{theorem}\label{thm:verydense}
  Suppose that $t\geq4$ and $R \sim P_{d,q}$ for some $q \geq 1 - e^{-t}$. Then, with high probability, for every $B \subseteq R$ with $|B| \leq 3/4 \cdot 2^d$, we have
  \[
    e_{G_R}(B, B^{\mathsf{c}}) \geq \frac{|B|}{8} \log_2 \left(\frac{2^d}{|B|}\right).
  \]
\end{theorem}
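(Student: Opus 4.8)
The plan is to show that when $R \sim P_{d,q}$ with $q \geq 1 - e^{-t}$ for $t \geq 4$, then with high probability $R$ is so dense that it contains \emph{all} faces of $Q^d$ of dimension up to roughly $t$ (or at least $\log_2$ of the relevant scale), which forces strong local expansion via Proposition~\ref{prop:geometry}, combined with a union bound over subsets $B$. More precisely, the key structural fact I would establish first is: \emph{with high probability, for every $k$-face $F$ of $Q^d$ with $k \leq c\log(1/e^{-t})= ct$ for a suitable small constant $c$, we have $F \cap R \neq \emptyset$ and moreover $R$ contains every vertex of $Q^d$}. The second statement (that $R$ contains all $2^d$ vertices of $Q^d$) holds whp because $\PP(\text{some vertex missing}) \leq 2^d (1-q) \leq 2^d e^{-t}$; this is $o(1)$ only when $t \gg d$, so in the regime $t = O(d)$ I instead need: \emph{whp every vertex $x$ of $Q^d$ that lies in $R$ has, inside $R$, all $2^d$ vertices present} — but that is false. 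So the correct approach is subtler: I should \emph{not} try to recover all of $Q^d$; instead I work inside $R$ directly.

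The real engine should be an edge-isoperimetric statement for $R$ that mimics Harper's theorem (Theorem~\ref{thm:Harper}) for the cube. The idea: first, whp $R$ contains a very large fraction of $Q^d$ — indeed $\EE|Q^d \setminus R| = (1-q)2^d \leq e^{-t}2^d$, so by Markov whp $|Q^d\setminus R| \leq e^{-t/2}2^d$, a tiny fraction since $t \geq 4$. Thus $R$ is obtained from $Q^d$ by deleting a $o(1)$-fraction of vertices (in fact an $e^{-t/2}$-fraction). Now, for a set $B \subseteq R$ with $|B| \leq \tfrac34 2^d$, I want $e_{G_R}(B, R\setminus B) \geq \tfrac{|B|}{8}\log_2(2^d/|B|)$. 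In $Q^n$ itself, the edge-isoperimetric inequality of Harper/Bernstein gives that a set $S$ of size $|S| \le 2^{d-1}$ has cube-boundary $e_{Q^d}(S, S^{\mathsf c}) \geq |S|\log_2(2^d/|S|)$; I would invoke exactly this. The two steps then are: (i) relate $e_{G_R}(B,B^{\mathsf c})$ to the \emph{cube} boundary $e_{Q^d}(B, Q^d \setminus B)$, losing only a small multiplicative factor because $R$ misses few vertices and because Proposition~\ref{prop:geometry} converts ``missing cube-vertices adjacent to $B$'' into genuine new edges of $G_R$; and (ii) handle the case $\tfrac12 2^d < |B| \leq \tfrac34 2^d$ separately, where $\log_2(2^d/|B|)$ is a small constant, so one only needs $e_{G_R}(B,B^{\mathsf c}) \geq \Omega(|B|)$, which follows from the cube expansion of the complement $B^{\mathsf c}$ together with the fact that $|B^{\mathsf c}|$ is a constant fraction of $2^d$ while $|Q^d \setminus R|$ is a tiny fraction (so $B^{\mathsf c} \cap R$ still has size $\Theta(2^d)$ and cube-expansion $\Theta(|B^{\mathsf c}|) = \Theta(|B|)$, and each such cube-edge either survives in $G_R$ or points to a deleted vertex — but deleted vertices are few, so most survive).

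For step (i), the precise mechanism is: let $B \subseteq R$, $|B| \le \tfrac12 2^d$. Each cube-edge of $Q^d$ leaving $B$ either (a) goes to a vertex of $R \setminus B$, hence is an edge of $G_R$ since $G_{Q^d}[R] \subseteq G_R$; or (b) goes to a vertex of $Q^d \setminus R$. Edges of type (a) contribute directly. For edges of type (b), I use Proposition~\ref{prop:geometry}: if $x \in B$ has a cube-neighbor $z \notin R$, then there is a $1$-face $F' = \{x,z\}$ of $Q^d$ with... hmm, this needs $R \cap F' = \{x\}$, which holds since $z \notin R$; but I need a larger face $F$ containing $x$ and some other vertex $y \in R$ at distance $\sqrt k$ with a suitable $(k-1)$-subface. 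This is where the argument gets delicate and is \textbf{the main obstacle}: converting ``$x$ has a missing cube-neighbor'' into ``$x$ gains a new edge in $G_R$ toward $R \setminus B$.'' The clean way around it: since at most $e^{-t/2}2^d$ vertices are missing, the number of type-(b) edges is at most $d \cdot e^{-t/2}2^d$, which for $|B|$ not too small is negligible compared to $|B|\log_2(2^d/|B|)$; and for small $|B|$ (say $|B| \le e^{-t/4}2^d$) I argue instead that whp \emph{every small set $B$ has almost all of its cube-neighborhood inside $R$} — formally, whp there is no set $W$ of $\le e^{-t/4}2^d$ vertices of $Q^d$ with more than, say, $|W|/100$ of them missing from $R$ together with... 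Actually the cleanest: whp, for every vertex $x$, at most $d/2$ of its $d$ cube-neighbors are missing from $R$ (true since each is missing independently with probability $\le e^{-t} \le e^{-4}$, so $\PP(\ge d/2 \text{ missing}) \le \binom{d}{d/2} e^{-td/2} = o(2^{-d})$, and union bound over $x$). Then at least half of every vertex's cube-edges survive into $G_R$, so $e_{G_R}(B, B^{\mathsf c}) \ge \tfrac12 e_{Q^d}(B, Q^d\setminus B) - (\text{edges internal to }B\text{ that got killed})$; being careful, $e_{G_R}(B,R\setminus B) \ge e_{Q^d}(B, Q^d \setminus B) - \#\{\text{cube-edges from }B\text{ to }Q^d\setminus R\} \ge e_{Q^d}(B,Q^d\setminus B) - (\text{small})$, and then invoke Harper. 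I would organize the writeup as: (1) whp bounds on the missing set and on per-vertex missing degree; (2) reduction of $e_{G_R}$ to cube-boundary minus a small error; (3) apply the cube edge-isoperimetric inequality in the two ranges $|B| \le \tfrac12 2^d$ and $\tfrac12 2^d < |B| \le \tfrac34 2^d$; (4) union bound, noting the exceptional event has probability $o(1)$ independent of $B$ since the structural facts in (1) already hold whp for $R$ as a whole.
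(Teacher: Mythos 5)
The core difficulty, which you yourself flag as ``the main obstacle,'' is not actually resolved by either fallback you propose, and the gap is not a small technicality.  Your bound
\[
  e_{G_R}(B, R\setminus B) \;\ge\; e_{Q^d}(B, Q^d\setminus B) - e_{Q^d}(B, Q^d\setminus R)
\]
is correct, but the error term cannot be made negligible by the crude estimates you suggest.  Using the per-vertex bound (every vertex has $\le d/2$ missing cube-neighbours) gives $e_{Q^d}(B, Q^d\setminus R)\le |B|\cdot d/2$, while Harper's theorem yields only $e_{Q^d}(B, Q^d\setminus B)\ge |B|\log_2(2^d/|B|)$; the difference is nonnegative only when $|B| < 2^{d/2}$, which is far below the range of interest.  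Using instead the global bound $e_{Q^d}(B, Q^d\setminus R)\le d\cdot|Q^d\setminus R| \le d\,e^{-t/2}2^d$ fails for all $|B|$ in the middle range: for instance with $t=4$, $d=100$, $|B| \approx 0.27\cdot 2^d$, Harper gives roughly $0.5\cdot 2^d$ while the error bound is roughly $13.5\cdot 2^d$.  No combination of these two bounds closes the gap, because the error term $e_{Q^d}(B, Q^d\setminus R)$ is of the same order as, or larger than, the target quantity $\tfrac{|B|}{8}\log_2(2^d/|B|)$ over a constant fraction of the allowed range of $|B|$.  The fundamental issue is that a vertex $b\in B$ may have all of its cube-neighbours outside $B$ be missing from $R$, so subtraction alone destroys the entire boundary; you need to \emph{recover} those boundary edges, not discard them.

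The paper does exactly this recovery, and it is genuinely more subtle than your sketch suggests.  Starting from a cube-edge $\{b,s\}$ with $b\in B$ and $s\notin R$, Proposition~\ref{prop:geometry} does produce a new edge $\{b,\phi(s)\}\in G_R$ for a cube-neighbour $\phi(s)\in R$ of $s$, but the resulting edge is useful only if $\phi(s)\notin B$.  To guarantee this, the paper introduces the decomposition of $Q^d\setminus B$ into the set $S$ of vertices with \emph{small} degree into $B$ (for which $\phi(s)$ can be chosen in $R\setminus B$ thanks to the $\eps$-good degree condition) and the set $L$ of vertices with \emph{large} degree into $B$ (for which the analogous construction must pass through the auxiliary set $X$ and $3$-faces).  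This is the content of Lemma~\ref{lem:deterministic}, which gives $e_{G_R}(B,B^\mathsf{c})\ge\max\{e_{Q^d}(B,S)/2,\,e_{Q^d}(L,X)/4\}$, and Theorem~\ref{thm:R} then either handles $|L|\ge N/20$ directly via property~\ref{item:epsgood-intersection} or, for small $|L|$, applies Harper to the enlarged set $D=B\cup L$ and transfers the estimate back to $B$ by a concavity argument.  Your proposal does identify the right starting point (density of $R$, Harper's theorem, Proposition~\ref{prop:geometry}), but it is missing the $S$/$L$/$X$ machinery that makes the error-correction actually go through, and without it the plan, as written, does not prove the theorem.
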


The proof will use the fact that the dense random polytope $R$ is still very close to $Q^d$, and therefore partially inherits its edge expansion properties.
The edge-isoperimetric inequality for the hypercube was proved by Harper~\cite{Harper64} and later reproved by several authors~\cites{B67, Hart76, Lindsey64}.

\begin{theorem}[\cite{Harper64}]
  \label{thm:Harper}
  For all $d \ge 1$ and all $C \subseteq Q^d$, we have
  \[
    e_{Q^d}(C, C^{\mathsf{c}}) \geq |C| \log_2\left(\frac{2^d}{|C|}\right).
  \]
\end{theorem}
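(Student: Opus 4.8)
The plan is to reformulate the inequality as an upper bound on the number of \emph{internal} edges and then prove that bound by induction on $d$. Write $e_{Q^d}(C)$ for the number of edges of $Q^d$ with both endpoints in $C$. Counting edge–vertex incidences within $C$ gives $d|C| = 2e_{Q^d}(C) + e_{Q^d}(C, C^{\mathsf{c}})$, since $Q^d$ is $d$-regular, so $e_{Q^d}(C, C^{\mathsf{c}}) = d|C| - 2e_{Q^d}(C)$. As $d|C| - |C|\log_2|C| = |C|\log_2(2^d/|C|)$, the assertion $e_{Q^d}(C, C^{\mathsf{c}}) \ge |C|\log_2(2^d/|C|)$ is therefore equivalent to
\[
  e_{Q^d}(C) \le \tfrac12\,|C|\log_2|C|,
\]
with the convention $0\log_2 0 = 0$. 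I would prove this last inequality.

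The induction is on $d$, the cases $d \in \{0,1\}$ being immediate (a set of size at most $1$ spans no edges, and a set of size $2$ in $Q^1$ spans at most one edge, matching $\tfrac12\cdot 2\cdot 1$). For the inductive step, split $Q^d$ along the last coordinate into two copies of $Q^{d-1}$, and let $C_0, C_1 \subseteq Q^{d-1}$ be the corresponding slices of $C$, with $a \coloneqq |C_0|$ and $b \coloneqq |C_1|$; relabeling if necessary, assume $a \ge b$. Every edge of $Q^d$ inside $C$ either lies inside one of the two slices or is a ``vertical'' edge joining two identified vertices, whence
\[
  e_{Q^d}(C) = e_{Q^{d-1}}(C_0) + e_{Q^{d-1}}(C_1) + |C_0 \cap C_1|.
\]
Applying the inductive hypothesis to $C_0$ and $C_1$ and using $|C_0 \cap C_1| \le \min\{a,b\} = b$, it suffices to establish the numerical inequality
\[
  \tfrac12 a\log_2 a + \tfrac12 b\log_2 b + b \le \tfrac12(a+b)\log_2(a+b) \qquad \text{for all } a \ge b \ge 0.
\]

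The heart of the argument is this numerical inequality, which I would verify by recognizing the ``defect'' term as a binary entropy. Put $s \coloneqq a+b$ and $p \coloneqq b/s \in [0, \tfrac12]$; then $(a+b)\log_2(a+b) - a\log_2 a - b\log_2 b = s\,H_2(p)$, where $H_2(p) \coloneqq -p\log_2 p - (1-p)\log_2(1-p)$, so the inequality becomes $H_2(p) \ge 2p$ for $p \in [0,\tfrac12]$. Since $H_2$ is concave on $[0,1]$ and $p \mapsto 2p$ is exactly the chord of the graph of $H_2$ through $(0,0)$ and $(\tfrac12,1)$, concavity yields $H_2(p) \ge 2p$ on $[0,\tfrac12]$, completing the induction and hence the proof. (Equality propagates precisely when $C_1 \subseteq C_0$ and $a = b$ at every level, i.e.\ when $C$ is a subcube, consistent with subcubes being the extremal sets.) I do not expect a serious obstacle here: the only point to watch is the bookkeeping of the convention $0\log_2 0 = 0$ in the degenerate cases $b = 0$ and $|C| \le 1$; an alternative route, should one prefer to avoid the entropy inequality, is the standard $i$-compression argument, which leaves $|C|$ and the left-hand side unchanged while not increasing the edge boundary, reducing to the case $C_1 \subseteq C_0$ in each slicing.
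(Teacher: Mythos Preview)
Your argument is correct; this is a clean and standard proof of the hypercube edge-isoperimetric inequality via the reformulation $e_{Q^d}(C) \le \tfrac12|C|\log_2|C|$ and induction along a coordinate split, with the entropy inequality $H_2(p) \ge 2p$ on $[0,\tfrac12]$ doing the work in the inductive step. The paper, however, does not prove \Cref{thm:Harper} at all: it is stated as a classical result with citations to Harper, Bernstein, Hart, and Lindsey, and is used as a black box in the proof of \Cref{thm:R}. So there is nothing to compare against---your proof simply supplies what the paper omits, and no gap needs addressing.
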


Note that \Cref{thm:verydense} can be seen as an edge-isoperimetric inequality for very dense random subpolytopes of $Q^d$, where we relax the condition of being a hypercube at the cost of obtaining a worse constant.

\subsection{Pseudorandom properties of $R$}

We start by defining pseudorandom properties of the polytope $R$ that are sufficient to guarantee good edge expansion.  Throughout the rest of the section, we will write $N \coloneqq 2^d$.  Moreover, for a vertex $x\in Q^d$ and a subset $C\subseteq Q^d$, we will denote by $\deg_{Q^d}(x,C) \coloneqq |N_{Q^d}(x) \cap C|$ the number of neighbors $x$ has in $C$. 

\begin{definition}\label{def:Rproperty}
  Given a $0/1$-polytope $R\subseteq Q^d$ and a real number $\eps>0$, we say that $R$ is \emph{$\eps$-good} if $R$ satisfies the following properties:
  \begin{enumerate}[label=(R\arabic*)]
  \item
    \label{item:epsgood-neighborhood}
    For every vertex $x\in R$, we have $\deg_{Q^d}(x, R) > (1-\eps)d$.
  \item
    \label{item:epsgood-intersection}
    For every set $C\subseteq Q^d$ of size $|C|\geq N/20$, we have $|C\cap R|\geq N/40$.
  \end{enumerate}
\end{definition}

Our first observation is that basic properties of the binomial distribution give that whp a very dense random polytope $R$ is $\eps$-good.

\begin{proposition}\label{prop:Risgood}
  Let $t\geq4$, and let $R\subseteq Q^d$ be a random polytope whose vertices are chosen independently with probability $q\geq 1-e^{-t}$. Then, with high probability, the polytope $R$ is $(2/t)$-good.
\end{proposition}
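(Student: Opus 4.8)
The plan is to verify the two defining properties of $(2/t)$-goodness separately via a union bound over the relevant sets, using Chernoff-type estimates on the binomial distribution. Throughout, write $N \coloneqq 2^d$ and recall that $R$ is obtained by including each vertex of $Q^d$ independently with probability $q \geq 1 - e^{-t}$; equivalently, each vertex is \emph{absent} with probability $1 - q \leq e^{-t}$.

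For property \ref{item:epsgood-intersection}, fix a set $C \subseteq Q^d$ with $|C| \geq N/20$. The quantity $|C \cap R|$ dominates a $\Bin(|C|, q)$ random variable, whose mean is at least $q N/20 \geq (1 - e^{-t})N/20 \geq N/40$ for $t \geq 4$ (in fact with room to spare, since $1 - e^{-4} > 49/50$). A Chernoff bound (\Cref{chernoff}) shows $\PP(|C \cap R| < N/40)$ is at most $\exp(-c N)$ for some absolute constant $c > 0$. Since there are at most $2^N$ choices of $C$, a union bound gives that the failure probability for \ref{item:epsgood-intersection} is at most $2^N \exp(-cN) = \exp(-(c - \log 2)N)$, which tends to $0$ provided $c > \log 2$; one should check the constants work out here, possibly by first noting it suffices to take $|C|$ exactly of size $\lceil N/20 \rceil$ and using monotonicity, or by being slightly more careful with the Chernoff exponent — this is the one spot where the naive bound is tight-ish and deserves attention. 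Alternatively, and more robustly, one can invoke the DKW inequality (\Cref{thm:DKW}) or simply note that whp $|R| \geq (1 - 2e^{-t})N$ and $|Q^d \setminus R| \leq 2e^{-t} N < N/40$, so that every $C$ with $|C| \geq N/20$ automatically satisfies $|C \cap R| \geq |C| - |Q^d \setminus R| \geq N/20 - N/40 = N/40$; this sidesteps the union bound over all $C$ entirely. I would use this second approach: the event $\{|Q^d \setminus R| < N/40\}$ holds whp by a single Chernoff bound on $\Bin(N, 1-q)$ with mean at most $e^{-t}N \leq e^{-4}N < N/40$.

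For property \ref{item:epsgood-neighborhood}, fix a vertex $x \in Q^d$. The number of neighbors of $x$ in $Q^d$ that are \emph{absent} from $R$ is distributed as $\Bin(d, 1-q)$, with mean at most $d e^{-t}$. I want this to be less than $\eps d = (2/t) d$ with high enough probability to survive a union bound over the $N = 2^d$ vertices $x$. Since $e^{-t} \leq 1/t$ for... well, this needs $t e^{-t} \leq 1$, which holds for all $t \geq 1$ — actually we need a bit of a gap, so note $e^{-t} \leq 1/(2t)$ fails for small $t$ but for $t \geq 4$ we have $e^{-t} \leq e^{-4} < 1/50$, comfortably below $2/t \le 1/2$. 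We then need $\PP(\Bin(d, 1-q) \geq (2/t)d) \leq \exp(-\omega(d))$ so that multiplying by $2^d$ still tends to $0$. By the second form of \Cref{chernoff} with $\ell = (2/t)d$ and mean $np = d(1-q) \leq d e^{-t}$, we get a bound of the form $\exp(-(2d/t)\log((2/t)/(e \cdot e^{-t}))) = \exp(-(2d/t)\log(2e^{t-1}/t))$, and for $t \geq 4$ the logarithm is at least $\log(2e^3/4) = \log(e^3/2) = 3 - \log 2 > 2$, so the exponent is at least $(4/t) \cdot d \cdot (\text{something}) $ — in any case it beats $d \log 2$ with room to spare once $t$ is bounded, and for large $t$ it only gets better. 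A union bound over the (at most) $N = 2^d$ choices of $x \in R$ then finishes this property. The main obstacle, such as it is, is purely bookkeeping: making sure the Chernoff exponents genuinely dominate $d \log 2$ uniformly over all $t \geq 4$, which is where one must not be sloppy with constants; conceptually there is nothing deep here, as both properties are robust consequences of $R$ being a $(1 - e^{-t})$-dense random subset with $t \geq 4$ large enough.

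Assembling these: the probability that $R$ fails to be $(2/t)$-good is at most the sum of the failure probabilities of \ref{item:epsgood-neighborhood} and \ref{item:epsgood-intersection}, each of which is $o(1)$, so $R$ is $(2/t)$-good whp, as claimed.
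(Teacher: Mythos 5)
Your proposal is correct and follows essentially the same route as the paper: for \ref{item:epsgood-intersection} you bound $|Q^d \setminus R|$ by a single Chernoff bound on $\Bin(N,1-q)$ and deduce $|C\cap R|\ge |C|-|R^{\mathsf c}|\ge N/40$, which is exactly what the paper does (and you are right that the naive union bound over all $2^N$ sets $C$ does \emph{not} close — good instinct to abandon it); for \ref{item:epsgood-neighborhood} you bound $\PP(\Bin(d,1-q)\ge 2d/t)$ and union-bound over the $2^d$ vertices, which is the paper's argument too — the paper's estimate $\binom{d}{2d/t}(1-q)^{2d/t}\le(ed(1-q)t/(2d))^{2d/t}$ is literally the same quantity as the second form of \Cref{chernoff}. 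The one soft spot is your uniformity-in-$t$ claim: plugging $t=4$ into the logarithm and then treating $2d/t$ as if $t=4$ only works for $t$ below about $6.7$; the clean fix is to note that $h(t)\coloneqq(2/t)\log(2e^{t-1}/t)$ is increasing for $t>2$ (its derivative is $(2/t^2)\log(t/2)$), so the exponent is $\ge h(4)\,d\approx 1.15\,d>d\log 2$ for all $t\ge 4$. You flag this as "bookkeeping," which is fair, but the monotonicity observation is the one-line lemma that actually discharges it.
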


\begin{proof}
  Since the degree of every vertex $x\in Q^d$ in the random set $R$ follows the binomial distribution $\Bin(d,q)$, we have
  \[
    \PP\bigl(\deg_{Q^d}(x, R) \le (1-2/t)d\bigr) \leq \binom{d}{2d/t} (1-q)^{2d/t} \leq \left(\frac{et}{2}\right)^{2d/t} e^{-2d} \le e^{-d},  \]
  where we used our assumptions that $1-q\leq e^{-t}$ and $t\geq4$.  By the union bound, $R$ violates \ref{item:epsgood-neighborhood} with probability at most $Ne^{-d}=o(1)$.  Further, since whp
  \[
    |R^{\mathsf{c}}| \le (1-q+o(1)) \cdot N \le (e^{-t} + o(1)) \cdot N \le N/40,
  \]
  for every $C \subseteq Q^d$ with $|C| \ge N/20$, we have $|C \cap R| \ge |C| - |R^{\mathsf{c}}| \ge N/20$, proving~\ref{item:epsgood-intersection}.
\end{proof}

\subsection{Deterministic Lemma}

In this subsection, we present the main technical result of the paper.  Given an $\eps$-good polytope $R \subseteq Q^d$ and a set $B \subseteq R$, our goal is to find a good lower bound on $e_{G_R}(B,B^\mathsf{c})$.  Before stating the lemma precisely, we define the following subsets of vertices in $Q^d$:
\begin{align}\label{eq:SLX}
  S\coloneqq S(B,\eps)&=\{x\in Q^d\setminus B:\: \deg_{{Q^d}}(x, B)\leq (1-2\eps)d\},\nonumber\\
  L\coloneqq L(B,\eps)&=\{x\in Q^d \setminus B:\: \deg_{{Q^d}}(x, B)> (1-2\eps)d\},\\
  X\coloneqq X(B,\eps)&=\{x\in Q^d \setminus (B\cup L):\: \deg_{Q^d}(x,L)>0\}\nonumber.
\end{align}
That is, $S$ is the set of vertices with small degree into $B$, $L$ is the set of vertices with large degree into $B$, and $X$ is the set of vertices not in $B$ that have a neighbor in $L$ (note that $S$ and $X$ are not necessarily disjoint). Moreover, note that the values of $S$, $L$, and $X$ depend only on the set $B$ and the hypercube $Q^d$, but not on the polytope $R$. Our main deterministic lemma is stated as follows.

\begin{lemma}\label{lem:deterministic}
  Let $\eps \in (0, 1/4)$, and suppose that $R\subseteq Q^d$ is $\eps$-good.  Then, for any $B\subseteq R$,
  \[
    e_{G_R}(B,B^\mathsf{c})\geq \max\left\{\frac{e_{Q^d}(B,S)}{2},\, \frac{e_{Q^d}(L,X)}{4}\right\},
  \]
  provided that $d$ is sufficiently large (as a function of $\eps$ only).
\end{lemma}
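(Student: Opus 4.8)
The plan is to prove the two lower bounds separately, in each case exhibiting enough edges of $G_R$ crossing the cut $(B,B^{\mathsf c})$, where $B^{\mathsf c}\coloneqq R\setminus B$; write also $\bar R\coloneqq Q^d\setminus R$. Two tools carry the argument. First, $G_{Q^d}[R]\subseteq G_R$, so any edge of $Q^d$ with both endpoints in $R$ that crosses the cut is an edge of $G_R$ crossing the cut; in particular $e_{G_R}(B,B^{\mathsf c})\geq e_{Q^d}(B,R\setminus B)$. Second, \Cref{prop:geometry} with $k=2$: if $u,v$ are opposite corners of a $2$-face $F$ of $Q^d$, both lying in $R$, and one of the four edges of $F$ meets $R$ in a single endpoint, then $\{u,v\}$ is an edge of $G_R$ (even when $\{u,v\}\notin Q^d$); I will always apply this with an edge of $F$ of the form $\{u',u''\}$ where $u'\in R$ and $u''\in\bar R$. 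I also record two consequences of $\eps$-goodness used throughout: by \ref{item:epsgood-neighborhood} every vertex of $R$, in particular of $B$, has fewer than $\eps d$ neighbours in $\bar R$; and, since $\bar R\cap R=\emptyset$, property \ref{item:epsgood-intersection} forces $|\bar R|<N/20$, so $R$ is $d$-dimensional (as $|R|>N/2$) and every vertex of $G_R$ has degree at least $d$ by \Cref{prop:leroux}.

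For the bound $e_{G_R}(B,B^{\mathsf c})\geq \tfrac12 e_{Q^d}(B,S)$ I would argue as follows. Every edge of $Q^d$ from $B$ to $S\cap R$ crosses the cut and has both endpoints in $R$, so $e_{G_R}(B,B^{\mathsf c})\geq e_{Q^d}(B,S\cap R)=e_{Q^d}(B,S)-e_{Q^d}(B,S\setminus R)$, and it suffices to prove $e_{Q^d}(B,S\setminus R)\leq e_{Q^d}(B,S\cap R)$. Since $S\setminus R\subseteq\bar R$, each $b\in B$ has fewer than $\eps d$ ``lost'' edges $\{b,s\}$ with $s\in S\setminus R$. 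For such an edge, writing $s=b\oplus e_j$, the vertex $b$ has more than $(1-\eps)d$ directions $i\neq j$ with $b\oplus e_i\in R$; for any such $i$ with moreover $b\oplus e_i\oplus e_j\in R$, the $2$-face at $b$ on directions $e_i,e_j$ contains the edge $\{b,s\}$, which meets $R$ only in $b$, so \Cref{prop:geometry} produces an edge $\{b,b\oplus e_i\oplus e_j\}$ of $G_R$; it crosses the cut unless $b\oplus e_i\oplus e_j\in B$, and in that case $s$ acquires an extra neighbour in $B$, which is charged against a different summand of $e_{Q^d}(B,S)$. Weighing lost edges against the rerouted and genuine ones, using $\eps<1/4$ and $d$ large, yields the inequality; a short separate argument, based on the minimum $G_R$-degree bound recorded above, handles the degenerate range where $B^{\mathsf c}$ is even smaller than $\bar R$.

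For the bound $e_{G_R}(B,B^{\mathsf c})\geq \tfrac14 e_{Q^d}(L,X)$ the point is that $L$ and $X$ both lie outside $B$, so genuine cut edges do not suffice and new edges of $G_R$ must be manufactured. Given $x\in X$ and a neighbour $l\in L$ of $x$, let $b=b(l)$ be the lexicographically smallest neighbour of $l$ in $B$; it exists and differs from $x$, since $l$ has more than $(1-2\eps)d>1$ neighbours in $B$, none equal to $x\notin B$. Consider the $2$-face $F$ with corners $l,b,x$ and the corner $w$ opposite to $l$, whose diagonals are $\{b,x\}$ and $\{l,w\}$. If $l\in R$, then $l\in B^{\mathsf c}$ and the more than $(1-2\eps)d$ edges of $Q^d$ from $l$ to $B$ are genuine cut edges; since $l$ has fewer than $2\eps d<(1-2\eps)d$ neighbours outside $B$, I can assign a distinct such edge to each relevant pair at $l$. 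If $l\notin R$ and $x\in R$, then $\{b,l\}$ is an edge of $F$ meeting $R$ only in $b$, so \Cref{prop:geometry} yields the cut edge $\{b,x\}\in G_R$ (here $x\in R\setminus B=B^{\mathsf c}$). The residual case $l,x\notin R$ is dealt with by one further application of \Cref{prop:geometry}, now in a $2$-face through a neighbour of $x$ lying in $R\setminus B$ — such a neighbour exists unless every non-$B$ neighbour of $x$ lies in $\bar R$, a possibility held in check by $|\bar R|<N/20$. Finally, a double-counting argument, in which each produced edge is charged by at most four pairs $(x,l)$ (roughly: an endpoint of the edge determines $x$, and $l$ is one of the at most two common neighbours of two of the vertices involved), gives the factor $4$.

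The hard part, I expect, is the second bound together with its bookkeeping: one must verify that for a sufficient number of configurations the geometric hypothesis of \Cref{prop:geometry} genuinely holds — some edge of the relevant $2$-face meets $R$ in exactly one vertex and the appropriate diagonal lies in $R$ — while simultaneously keeping the charging multiplicities bounded. The residual case $l,x\notin R$ in the second bound, and the degenerate case in which $B$ nearly exhausts $R$ in the first, are the delicate corners of the analysis.
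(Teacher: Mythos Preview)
Your plan has the right geometric intuition---use \Cref{prop:geometry} on small faces to manufacture $G_R$-edges when hypercube edges leave $R$---but both halves, as written, contain genuine gaps that the paper's proof handles differently.

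\textbf{First bound.} Your framing ``show $e_{Q^d}(B,S\setminus R)\le e_{Q^d}(B,S\cap R)$'' is not what gets proved, and your charging sentence (``$s$ acquires an extra neighbour in $B$, which is charged against a different summand of $e_{Q^d}(B,S)$'') is backwards: edges from $B$ to $s$ are liabilities you must cover, not credits you may spend. The paper instead builds an explicit map $\Psi_1\colon E_{Q^d}(B,S)\to E_{G_R}(B,B^{\mathsf c})$ with fibres of size at most~$2$. The key choice is to attach the auxiliary vertex to $s$, not to $b$: since $s\in S\subseteq Q^d\setminus(B\cup L)$, one has $\deg_{Q^d}(s,R\setminus B)\ge\eps d$, so pick $\phi(s)\in R\cap B^{\mathsf c}$ adjacent to $s$. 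If $s\in R$ send $\{b,s\}$ to itself; if $s\notin R$ apply \Cref{prop:geometry} on the $2$-face through $b,s,\phi(s)$ with $F'=\{b,s\}$ to get $\{b,\phi(s)\}\in E(G_R)$. The fibre bound is then immediate: a distance-$1$ edge has one preimage, a distance-$2$ edge at most two (the two common neighbours). No ``degenerate range'' case split is needed.

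\textbf{Second bound.} The residual case $l,x\notin R$ cannot be handled with a single further $2$-face as you suggest; this is where the paper uses \Cref{prop:geometry} with $k=3$. Concretely, the paper works not with pairs $(x,l)$ but with ordered $2$-faces $(y_1,y_2,y_3,y_4)$ where $y_1\in B$, $y_2\in L$, $y_3\in X$; there are at least $(1-2\eps)d\cdot e_{Q^d}(L,X)\ge \tfrac{d}{2}\,e_{Q^d}(L,X)$ of these. For each such face one picks $\phi\in R\cap B^{\mathsf c}$ adjacent to $y_3$ and distinct from $y_2,y_4$, and then runs through five cases on which of $y_2,y_3,y_4$ lie in $R$; in the last case $y_2,y_3,y_4\notin R$ one passes to the $3$-face spanned by the $2$-face and $\phi$, applies \Cref{prop:geometry} with $F'$ the entire $2$-face, and obtains the long diagonal $\{y_1,\phi\}\in E(G_R)$. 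The factor $4$ does not come from a direct ``at most four pairs per edge'' count---that claim is not correct as stated---but from showing the resulting map has fibres of size at most $2d$, so that $e_{G_R}(B,B^{\mathsf c})\ge \tfrac{1}{2d}\cdot\tfrac{d}{2}\,e_{Q^d}(L,X)=\tfrac14\,e_{Q^d}(L,X)$.
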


\begin{proof}

  We start by noticing that every vertex $x \notin B\cup L$ has many $Q_d$-neighbors in $R \setminus B$. 

  \begin{claim}\label{clm:posdegree}
    If $x \notin B \cup L$, then $\deg_{Q^d}(x, R \setminus B) \geq \eps d$.
  \end{claim}
  \begin{proof}
    Since $R$ is $\eps$-good and $x \notin B \cup L$, we have
    \[
      \deg_{Q^d}(x, R \setminus B) \geq \deg_{Q^d}(x,R) - \deg_{Q^d}(x,B) \geq (1-\eps)d - (1-2\eps)d = \eps d,
    \]
    as desired.
  \end{proof}

  We now prove each inequality in the statement separately.

  \begin{claim}
    $e_{G_R}(B,B^{\mathsf{c}}) \geq e_{Q^d}(B,S)/2$.
  \end{claim}

  \begin{proof}
    In order to prove the asserted inequality, it suffices to construct a map $\Psi_1 \colon E_{Q^d}(B,S)\rightarrow E_{G_R}(B,B^{\mathsf{c}})$ satisfying $|\Psi_1^{-1}(e)| \le 2$ for all $e$.
    First, for each $s \in S$, let $\phi(s) \in R \cap B^{\mathsf{c}}$ be an arbitrary vertex such that $\{s,\phi(s)\} \in E(Q^d)$; the existence of such a vertex is guaranteed by \Cref{clm:posdegree}.
    Now, fix some $b \in B$ and $s \in S$ that are adjacent in $Q^d$ (\Cref{fig:fig2}).
    Since $\phi(s) \notin B$, we have $\phi(s)\neq b$ and, consequently, $\dist_{Q^d}(b,\phi(s))=2$.
    Let $F_2$ be the unique $2$-face of $Q^d$ containing the vertices $\{b,s,\phi(s)\}$. There are two possibilities:

    \begin{figure}[h]
      \centering
      {\hfil \begin{tikzpicture}[scale=1.5, x={(1,0,0)}, y={(0,1,0)}, z={(0,-0.3,0.7)}]
          \draw[dashed, thick] (0,0) --(0,1)--(1,1);
          \draw[dashed, thick] (1,0) -- (0,0);
          \draw[dashed, thick] (1,0) -- (1,1);

          \filldraw[black] (0,0) circle (2pt);
          \node[below left, black] at (0,0) {$b \in R\cap B$};
          \filldraw[black] (0,1) circle (2pt);
          \node[above left, black] at (0,1) {$s\in B^{\mathsf{c}}$};
          \filldraw[black] (1,1) circle (2pt);
          \node[above right, black] at (1,1) {$\phi(s) \in R\cap B^{\mathsf{c}}$};
          \filldraw[black] (1,0) circle (2pt);
        \end{tikzpicture}\hfil}
      \caption{The $2$-face $F_2$ containing $\{b,s,\phi(s)\}$.}
      \label{fig:fig2}
    \end{figure}
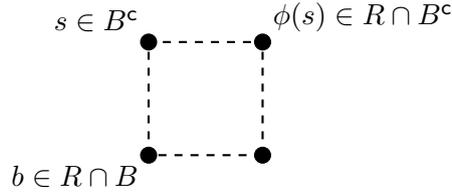

    \begin{enumerate}[label=(\roman*)]
    \item
      \underline{$s\in R$}:
      The edge $\{b,s\}$ is an edge of $G_R$ from $B$ to $B^\mathsf{c}$.  We set $\Psi_1(\{b,s\}) \coloneqq \{b,s\}$.
    \item
      \underline{$s\notin R$}:
      \Cref{prop:geometry} with $k=2$, $F=F_2$, and $F'=\{b,s\}$ yields that $\{b,\phi(s)\} \in E(G_R)$. We set $\Psi_1(\{b,s\}) \coloneqq \{b,\phi(s)\}$.
    \end{enumerate}

    Let $\{u,v\}$ be an edge in the image of $\Psi_1$.
    To show that $|\Psi_1^{-1}(\{u,v\})|\leq 2$, we consider two cases.
    If $\dist_{Q^d}(u,v)=1$, then $\Psi_1^{-1}(\{u,v\}) = \{\{u,v\}\}$.
    Otherwise, if $\dist_{Q^d}(u,v)=2$, we have $\{u,v\} = \{b,\phi(s)\}$ for some $s\in S$ that is a common neighbor of both $u$ and $v$;  thus $|\Psi_1^{-1}(u,v)|\leq 2$.
    This concludes the proof.
  \end{proof}

  The proof of the second inequality is similar but slightly more technical.

  \begin{claim}
    $e_{G_R}(B,B^{\mathsf{c}}) \ge e_{Q^d}(L,X)/4$.
  \end{claim}

  \begin{proof}
    Let $\cC$ be the family of ordered $2$-faces $(y_1,y_2,y_3,y_4)$ of $Q^d$ that satisfy:
    \begin{enumerate}[label=(\alph*)]
    \item 
      $\{y_1,y_2\}$, $\{y_2,y_3\}$, $\{y_3,y_4\}$, and $\{y_1,y_4\}$ are edges of $Q^d$ and
    \item 
      $y_1\in B$, $y_2\in L$, and $y_3 \in X$
    \end{enumerate}
    and observe that
    \begin{equation}
      \label{eq:cC-lower}
      |\cC| = \sum_{\substack{y_2y_3 \in E(Q^d) \\ y_2 \in L, y_3 \in X}} \deg_{Q^d}(y_2, B) \ge e_{Q^d}(L, X) \cdot (1-2\eps)d \ge \frac{d}{2} \cdot e_{Q^d}(L, X),
    \end{equation}
    where we used that $\deg_{Q^d}(y_2,B)\geq (1-2\eps)d$ for $y_2\in L$, see \eqref{eq:SLX}.
    In order to prove the asserted inequality, it thus suffices to construct a map $\Psi_2 \colon \cC \rightarrow E_{G_R}(B,B^{\mathsf{c}})$ with $|\Psi_2^{-1}(e)| \le 2d$ for all $e$.

    To this end, for each face $C = (y_1,y_2,y_3,y_4) \in \cC$, let $\phi(C) \in B^{\mathsf{c}}\cap R$ be a vertex such that $\{y_3,\phi(C)\} \in E(Q^d)$ and $\phi(C)\notin \{y_2,y_4\}$ (\Cref{fig:fig3};
    the existence of such a vertex is guaranteed by \Cref{clm:posdegree}, as $X \subseteq (B \cup L)^{\mathsf{c}}$, provided that $d$ is sufficiently large as a function of $\eps$.
    Note that $\dist_{Q^d}(y_1, \phi(C))=3$, and let $\tilde{C}$ be the unique $3$-face of $Q^d$ that contains both $C$ and $\phi(C)$.

    \begin{figure}[h]
      \centering
      {\hfil \begin{tikzpicture}[scale=1.5, x={(1,0,0)}, y={(0,1,0)}, z={(0,-0.3,0.7)}]
          \coordinate (000) at (4,0.25,0);
          \coordinate (100) at (5,0.25,0);
          \coordinate (010) at (4,1.25,0);
          \coordinate (110) at (5,1.25,0);
          \coordinate (001) at (4,0.25,1);
          \coordinate (101) at (5,0.25,1);
          \coordinate (011) at (4,1.25,1);
          \coordinate (111) at (5,1.25,1);

          \draw[dashed, thick] (000) -- (100);
          \draw[dashed, thick] (000) -- (010);
          \draw[dashed, thick] (000) -- (001);
          \draw[dashed, thick] (100) -- (101);
          \draw[dashed, thick] (010) -- (011);
          \draw[dashed, thick] (001) -- (101);
          \draw[dashed, thick] (001) -- (011);
          \draw[dashed, thick] (101) -- (111);
          \draw[dashed, thick] (011) -- (111);
          \draw[dashed, thick] (010) -- (110);
          \draw[dashed, thick] (100) -- (110);
          \draw[dashed, thick] (110) -- (111);

          \filldraw[black] (000) circle (1.5pt);
          \filldraw[black] (100) circle (1.5pt);
          \filldraw[black] (010) circle (1.5pt);
          \filldraw[black] (110) circle (1.5pt);
          \node[above right, black] at (110) {$\phi(C)$};
          \filldraw[black] (111) circle (1.5pt) ;
          \node[above left, black] at (111) {$y_3$};
          \filldraw[black] (001) circle (1.5pt);
          \node[below left, black] at (001) {$y_1$};
          \filldraw[black] (101) circle (1.5pt);
          \node[below right, black] at (101) {$y_4$};
          \filldraw[black] (011) circle (1.5pt);
          \node[above left, black] at (011) {$y_2$};

        \end{tikzpicture}\hfil}
      \caption{The $3$-face $\tilde{C}$ containing $C=\{y_1,y_2,y_3,y_4\}$ and $\phi(C)$.}
      \label{fig:fig3}
    \end{figure}
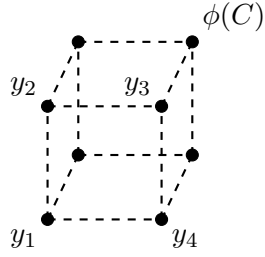

    There are several possibilities:
    \begin{enumerate}[label=(\roman*)]
    \item
      \underline{$y_2\in R$}: The edge $\{y_1, y_2\}$ belongs to $E_{G_R}(B, B^{\mathsf{c}})$.  We set $\Psi_2(C) \coloneqq \{y_1,y_2\}$.
    \item 
      \underline{$y_2\notin R$, $y_3\in R$}: \Cref{prop:geometry} with $k=2$, $F=C$, and $F'=\{y_1,y_2\}$ yields that $\{y_1, y_3\} \in E(G_R)$. We set $\Psi_2(C) \coloneqq \{y_1,y_3\}$.
    \item
      \underline{$y_2,y_3\notin R$, $y_4\in R \cap B^{\mathsf{c}}$}: The edge $\{y_1, y_4\}$ belongs to $E_{G_R}(B, B^{\mathsf{c}})$.  We set $\Psi_2(C) \coloneqq \{y_1,y_4\}$.
    \item 
      \underline{$y_2,y_3\notin R$, $y_4\in R\cap B$}: Let $F$ be the unique $2$-face containing both $y_4$ and $\phi(C)$.  \Cref{prop:geometry} with $k=2$, $F$ defined above, and $F' = \{y_3,y_4\}$ yields that  $\{y_4, \phi(C)\} \in E(G_R)$. We set $\Psi_2(C) \coloneqq \{y_4,\phi(C)\}$.
    \item 
      \underline{$y_2,y_3,y_4 \notin R$}: \Cref{prop:geometry} with $k=3$, $F=\tilde{C}$, and $F'=C$ yields that $\{y_1,\phi(C)\} \in E(G_R)$.  We set $\Psi_2(C) \coloneqq \{y_1,\phi(C)\}$.
    \end{enumerate}

    Let $\{u,v\}$ be an edge in the image of $\Psi_2$.
    To show that $|\Psi_2^{-1}(\{u,v\})|\leq 2d$, as in the previous proof, we consider several cases, depending on the distance between $u$ and $v$.

    If $\dist_{Q^d}(u,v)=1$, then either $\{u,v\} = \{y_1,y_2\}$ or $\{u,v\}=\{y_1,y_4\}$. In each of the two subcases, there are at most $d-1$ choices for $y_3$ (which then determines the remaining of $C$).  Thus, $|\Psi_2^{-1}(\{u,v\})| \le 2(d-1)$.

    If $\dist_{Q^d}(u,v)=2$, then either $\{u,v\}=\{y_1,y_3\}$ or $\{u,v\}=\{y_4,\phi(C)\}$.  In the first subcase, there are at most two ways to choose the ordered face $C$ (since there is only one $2$-face containing $\{y_1,y_3\}$). In the latter subcase, there are two choices for the vertex $y_3$ (the common neighbors of $y_4$ and $\phi(C)$) and, for each of those, at most $d-2$ choices for $y_2$, which then uniquely determines $C$.  Thus, $|\Psi_2^{-1}(\{u,v\})| \leq 2 + 2(d-2) \le 2d$.

    Finally, if $\dist_{Q^d}(u,v)=3$, we have $\{u,v\}=\{y_1,\phi(C)\}$.  There is a unique $3$-face $\tilde{C}$ that contains both $u$ and $v$ and two choices to decide which of those two vertices is $y_1$.  Finally, there are $6$ further choices for an ordered face $C$ that contains $y_1$. This implies that $|\Psi_2^{-1}(\{u,v\})|\leq 12 \le 2d$.
  \end{proof}

  The proof of the lemma now follows by combining the last two claims.
\end{proof}

\subsection{Proof of \Cref{thm:verydense}}

We are now able to prove the main theorem of this section. Note that the next theorem, combined with \Cref{prop:Risgood}, implies \Cref{thm:verydense}.

\begin{theorem}\label{thm:R}
  Let $\eps \in (0, 1/4)$, let $d \ge 10$, and suppose that $R \subseteq Q^d$ is $\eps$-good. Then, for every $B\subseteq R$ with $|B| \le 3N/4$,
  \[
    e_{G_R}(B,B^{\mathsf{c}})\geq \frac{|B|}{8} \log_2\left(\frac{N}{|B|}\right).
  \]
\end{theorem}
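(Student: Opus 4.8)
The plan is to combine the deterministic bound of \Cref{lem:deterministic} with Harper's inequality (\Cref{thm:Harper}) applied both to $B\cup L$ and to its hypercube-complement $S$, and---when $L$ is large---with the trivial observation that $G_{Q^d}[R]\subseteq G_R$. Write $\phi(x)\coloneqq x\log_2(N/x)$ throughout; this function increases on $(0,N/e]$, decreases on $[N/e,N)$, and has maximum $\phi(N/e)=N\log_2(e)/e<0.54\,N$. Put $a\coloneqq e_{Q^d}(B,S)$ and $c\coloneqq e_{Q^d}(L,X)$; since every $Q^d$-edge from $L$ to $S$ has its $S$-endpoint in $X$, we have $c=e_{Q^d}(L,S)$. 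By \Cref{lem:deterministic}, $e_{G_R}(B,B^{\mathsf{c}})\ge\max\{a/2,\,c/4\}$, so it suffices to prove that in every case either $c\ge\tfrac12\phi(|B|)$ or $a\ge\tfrac14\phi(|B|)$. I would split according to the size of $L$, using throughout that $|R^{\mathsf{c}}|<N/20$ (which follows immediately from \ref{item:epsgood-intersection} applied to $C=R^{\mathsf{c}}$).

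\emph{If $|L|\ge N/10$}, I would set \Cref{lem:deterministic} aside and argue directly. Every $v\in L$ satisfies $\deg_{Q^d}(v,B)>(1-2\eps)d$ and lies outside $B$, so $L\cap R\subseteq R\setminus B=B^{\mathsf{c}}$; since $G_{Q^d}[R]\subseteq G_R$, this gives $e_{G_R}(B,B^{\mathsf{c}})\ge e_{Q^d}(B,L\cap R)\ge(1-2\eps)d\,|L\cap R|\ge(1-2\eps)d\,(|L|-N/20)>\tfrac{dN}{40}\ge\tfrac N4$ for $d\ge10$, which already exceeds $\tfrac18\phi(|B|)$.

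\emph{If $|L|<N/10$}, the key step is to show that $a+c\ge\tfrac34\phi(|B|)$. Applying \Cref{thm:Harper} to $B\cup L$ (and noting $Q^d\setminus(B\cup L)=S$) gives $a+c=e_{Q^d}(B\cup L,S)\ge\phi(|B\cup L|)$, and applying it to $S$ gives $a+c\ge\phi(|S|)$. If $|B\cup L|\le N/e$, then $\phi(|B\cup L|)\ge\phi(|B|)$ by monotonicity of $\phi$ below $N/e$ (recall $|B|\le|B\cup L|$), and the claim holds. If instead $|B\cup L|>N/e$, then $|S|=N-|B\cup L|<(1-1/e)N$ while $|S|=N-|B|-|L|\ge N-\tfrac34N-\tfrac1{10}N=\tfrac3{20}N$---this is exactly where the hypothesis $|B|\le\tfrac34N$ is used---so $|S|$ lies in $[\tfrac3{20}N,(1-\tfrac1e)N]$, an interval containing the maximiser $N/e$; hence $\phi(|S|)\ge\min\{\phi(\tfrac3{20}N),\phi((1-\tfrac1e)N)\}$, and a short numerical check shows this minimum is at least $\tfrac34\phi(N/e)\ge\tfrac34\phi(|B|)$, proving the claim. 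With $a+c\ge\tfrac34\phi(|B|)$ in hand, I would conclude: either $c\ge\tfrac12\phi(|B|)$, or else $a=(a+c)-c\ge\tfrac34\phi(|B|)-\tfrac12\phi(|B|)=\tfrac14\phi(|B|)$.

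The step I expect to be the main obstacle is the subcase $|B\cup L|>N/e$ inside the second case. The naive strategy---bounding $e_{G_R}(B,B^{\mathsf{c}})$ below by $\tfrac18\phi(|B\cup L|)$ via \Cref{lem:deterministic} and Harper---breaks down precisely because $\phi$ is not monotone, so once $B\cup L$ overshoots its maximiser one has to extract boundary edges from the complement $S$ instead; this works only because $S$ is forced to be a constant fraction of $N$ (hence the role of $|B|\le\tfrac34N$), and one then needs the numerical inequality $\min\{\phi(\tfrac3{20}N),\phi((1-\tfrac1e)N)\}\ge\tfrac34\phi(N/e)$, which holds but with a fairly small margin---so the slack in the constants $\tfrac12$ and $\tfrac14$ coming from \Cref{lem:deterministic} is what makes the argument close. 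The only other thing to be careful about is that the two cases ($|L|\ge N/10$ handled by bare hypercube edges, $|L|<N/10$ handled by \Cref{lem:deterministic} and Harper) genuinely exhaust all of $B$.
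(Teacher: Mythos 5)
Your proof is correct, and its overall skeleton matches the paper's: a case split on the size of $L$, where the large-$L$ case is handled by bare hypercube edges from $B$ into $L\cap R$ (using \ref{item:epsgood-intersection} to guarantee $|R^{\mathsf{c}}|<N/20$), and the small-$L$ case combines \Cref{lem:deterministic} with Harper's inequality applied to $D=B\cup L$. Your threshold is $N/10$ rather than the paper's $N/20$, but both work.

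The genuine divergence is in how the Harper bound $\phi(|D|)$ is converted into a bound of the form $\mathrm{const}\cdot\phi(|B|)$, which is nontrivial because $\phi(x)=x\log_2(N/x)$ is not monotone and $|D|$ can overshoot the maximiser $N/e$ even when $|B|\le 3N/4$. The paper handles this with a concavity/ratio argument: it observes that $f(x+a)/f(x)$ is decreasing in $x$ for concave positive $f$, so the worst ratio $\phi(|D|)/\phi(|B|)$ occurs at $|B|=3N/4$, $|D|=3N/4+N/20$, yielding $\phi(|D|)\ge\tfrac45\phi(|B|)$, and then uses $\max\{a/2,c/4\}\ge(a+c)/6$. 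You instead apply Harper to whichever of $D$ or its complement $S$ is closer to $N/e$: monotonicity below $N/e$ handles the subcase $|D|\le N/e$ directly, and when $|D|>N/e$ you note that $|S|=N-|B|-|L|\ge 3N/20$ is forced to sit in an interval straddling $N/e$, so $\phi(|S|)$ is bounded below by a small numerical check at the endpoints. You then close with the elementary dichotomy ``either $c\ge\tfrac12\phi(|B|)$ or $a\ge\tfrac14\phi(|B|)$'' in place of the convex-combination bound on the max. Your route avoids the concavity-ratio lemma (which is a cleaner black box but less transparent) at the cost of a tighter numerical margin ($0.411\ge0.398$ rather than the paper's comfortable $4/5$). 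Both are valid; the paper's version gives a marginally better constant ($\phi(|B|)\cdot 2/15$ vs.\ $\phi(|B|)/8$), though the theorem only claims $\phi(|B|)/8$.
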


\begin{proof}
  Fix some $B\subseteq R$ with $|B| \le 3N/4$ and let $S$, $L$, and $X$ be the sets defined as in \eqref{eq:SLX}.
  We consider two cases, depending on the size of $L$.

  \smallskip
  \noindent\underline{Case 1}: $|L|\geq N/20$.
  \smallskip

  Since $R$ is $\eps$-good, by property \ref{item:epsgood-intersection}, we have that $|L\cap R|\geq N/40$. Since $G_{Q^d}[C \cap R] \subseteq G_R[C \cap R]$ for every $C \subseteq Q^d$, we obtain that
  \[
    e_{G_R}(B,B^{\mathsf{c}}) \geq e_{Q^d}(B, L \cap R)\geq |L \cap R|\cdot (1-2\eps)d \geq \frac{dN}{80} \ge \frac{N}{8} \ge \frac{|B|}{8}\log_2\left(\frac{N}{|B|}\right),
  \]
  where we used that $\deg_{Q^d}(y, B) \ge (1-2\eps)d$ for every $y \in L$ as well as the assumption on $d$.

  \smallskip
  \noindent\underline{Case 2}: $|L|< N/20$.
  \smallskip

  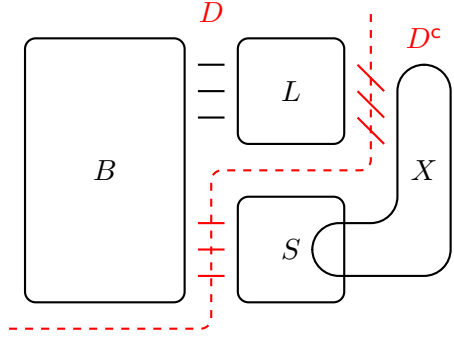
\begin{figure}[h]
    \centering
    {\hfil\begin{tikzpicture}[scale=0.7]


        \draw[rounded corners, thick] (-1,0) rectangle (2,5) node[midway] {$B$};
        \draw[rounded corners, thick] (3,0) rectangle (5,2) node[midway] {$S$};
        \draw[rounded corners, thick] (3,3) rectangle (5,5) node[midway] {$L$};

        \draw[thick, rounded corners=10pt] (4.4, 0.5) -- (4.4, 1.5) -- (6, 1.5) -- (6, 4.5) -- (7, 4.5) -- (7, 0.5) -- cycle;
        \node at (6.5, 2.5) {$X$};

        \draw[thick,rounded corners, dashed, red] (-1.3, -0.5) -- (2.5, -0.5) -- (2.5, 2.5) -- (5.5, 2.5) -- (5.5, 5.5);
        \node[red] at (2.5, 5.5) {$D$};
        \node[red] at (6.5, 5) {$D^{\mathsf{c}}$};

        \draw[thick, red] (2.25,0.5) -- (2.75,0.5);
        \draw[thick, red] (2.25,1) -- (2.75,1);
        \draw[thick, red] (2.25,1.5) -- (2.75,1.5);

        \draw[thick] (2.25,3.5) -- (2.75,3.5);
        \draw[thick] (2.25,4) -- (2.75,4);
        \draw[thick] (2.25,4.5) -- (2.75,4.5);

        \draw[thick, red] (5.25,3.5) -- (5.75,3);
        \draw[thick, red] (5.25,4) -- (5.75,3.5);
        \draw[thick, red] (5.25,4.5) -- (5.75,4);
      \end{tikzpicture}\hfil}
    \caption{The sets $B$, $D$, $L$, $S$ and $X$ and the edges from $e_{G_Q^d}(D, D^{\mathsf{c}})$ in red.}
    \label{fig:fig4}
  \end{figure}
  Let $D \coloneqq B \cup L$, and note that $e_{Q^d}(D,D^{\mathsf{c}})=e_{Q^d}(B,S)+e_{Q^d}(L,X)$ (\Cref{fig:fig4}).  \Cref{lem:deterministic} and the edge-isoperimetric inequality for $Q^d$ (\Cref{thm:Harper}) imply that
  \begin{equation}\label{eq:verydense}
    e_{G_R}(B,B^{\mathsf{c}})\geq \max\left\{\frac{e_{Q^d}(B,S)}{2},\, \frac{e_{Q^d}(L,X)}{4}\right\} \ge \frac{e_{Q^d}(D, D^{\mathsf{c}})}{6} \geq \frac{|D|}{6}\log_2\left(\frac{N}{|D|}\right).
  \end{equation}
  Finally, define the function $f \colon [0,N] \to [0,\infty)$ by $f(x) \coloneqq x \log_2(N/x)$ and note that $f$ is concave and positive. Therefore, the function $f(x+a)/f(x)$ is decreasing for $a>0$. Since $|B| \le 3N/4$ and $|B| \le |D| \le |B| + |L| \le |B| + N/20$, we have
  \[
    f(|D|) = \frac{f(|D|)}{f(|B|)} \cdot f(|B|) \ge \frac{f(3N/4+|D|-|B|)}{f(3N/4)} \cdot f(|B|) \ge \frac{4f(|B|)}{5}.
  \]
  Using~\eqref{eq:verydense}, we may finally conclude that
  \[
    e_{G_R}(B, B^{\mathsf{c}}) \ge \frac{f(|D|)}{6} \ge \frac{f(|B|)}{8} = \frac{|B|}{8}\log_2\left(\frac{N}{|B|}\right),
  \]
  as desired.
\end{proof}

\newpage

\section{The general case}\label{sec:general}

In this section, we prove \Cref{thm:main}. As previously discussed in the proof overview, our approach is to project the random polytope $P$ onto its first $d$ coordinates, where $d$ is chosen such that $p2^{n-d}$ is a sufficiently large constant. The projected polytope $R$ will have good edge expansion by \Cref{thm:verydense}. This, together with the projection lemma (\Cref{lem:projection}), will yield our desired bounds. We now proceed to the details.

\subsection{Random Properties of the Projection}\label{subsec:randprojection}

Let $P \sim P_{n,p}$ and let $d$ be the unique integer such that
\begin{equation}
  \label{eq:t}
  \max\{8, t_0\} \le t \coloneqq p2^{n-d} < 2\max\{8, t_0\},
\end{equation}
where $t_0$ is the constant from the assertion of \Cref{lemma:binomial-tilt}.
Note that our assumption that $p \geq 2^{-0.99n}$ implies that
\begin{equation}
  \label{eq:d}
  n/200 \leq d < n
\end{equation}
for sufficiently large $n$.
As usual, let $\pi \colon Q^n \to Q^d$ denote the projection onto the first $d$ coordinates, let
\[ N:=2^d,\] and let $R \coloneqq \pi(P)$ be the projected polytope.  For each nonnegative integer $\ell$, let $X_\ell$ denote the set of all $x \in Q^d$ whose fiber has size $\ell$ and let $\xi_\ell$ be the probability that a given vertex of $Q^d$ belongs to $X_\ell$, i.e.,
\begin{equation}
  \label{eq:Xl}
  X_\ell \coloneqq \{x \in Q^d : |P_x| = \ell\}
  \qquad
  \text{and}
  \qquad
  \xi_\ell \coloneqq \PP\bigr(\Bin(2^{n-d},p) = \ell\bigr).
\end{equation}
Further, for every $L \ge 0$, denote
\[
  X_{\le L} \coloneqq \bigcup_{\ell \le L} X_\ell
  \qquad
  \text{and}
  \qquad
  \xi_{\le L} \coloneqq \sum_{\ell \le L} \xi_\ell.
\]
We now describe the random properties necessary for the proof.

\begin{proposition}\label{prop:Pprop}
  With high probability, $P$ has the following properties:
  \begin{enumerate}[label=(P\arabic*)]
  \item
    \label{item:P-size}
    $|P| = (1 + o(1)) \cdot p2^n = (1+o(1)) \cdot t 2^d$;
  \item
    \label{item:P-fibers-empirical-dist}
    For every $L \ge 0$, $\bigl||X_{\le L}| - \xi_{\le L} \cdot N\bigr| \le N^{2/3}$.
  \item
    \label{item:P-fibers-tail}
    $|X_\ell| \le N\cdot(2et/\ell)^\ell$ for every $\ell \ge 2et$;
  \item
    \label{item:P-pi-good}
    $R = \pi(P)$ is $(2/t)$-good;
  \end{enumerate}
\end{proposition}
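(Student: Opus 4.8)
The plan is to prove each of the four properties separately, since they are essentially independent events, each holding with high probability; a union bound then finishes the argument.

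\medskip

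\textbf{Properties \ref{item:P-size} and \ref{item:P-fibers-empirical-dist}.} For \ref{item:P-size}, I would note that $|P| = \sum_{x \in Q^d} |P_x|$ is a sum over the $2^d = N$ fibers, and the vertices of $P$ are chosen independently, so $|P| \sim \Bin(2^n, p)$ with mean $p2^n = t2^d$. Since $t2^d \ge 8 \cdot 2^d \to \infty$, Chebyshev's inequality (or \Cref{chernoff}) gives concentration around the mean up to a $(1+o(1))$ factor; here one uses \eqref{eq:d} to ensure $d \to \infty$. For \ref{item:P-fibers-empirical-dist}, the key observation is that the $N$ fiber-sizes $|P_x|$, $x \in Q^d$, are i.i.d.\ copies of $\Bin(2^{n-d}, p)$, and $|X_{\le L}| = \sum_{x \in Q^d} \1_{|P_x| \le L}$, so $|X_{\le L}|/N$ is exactly the empirical distribution function of this sequence evaluated at $L$, while $\xi_{\le L}$ is the true c.d.f. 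Applying the DKW inequality (\Cref{thm:DKW}) with $\eps = N^{-1/3}$ gives that the supremum over $L$ of $\bigl||X_{\le L}|/N - \xi_{\le L}\bigr|$ exceeds $N^{-1/3}$ with probability at most $2\exp(-2N^{1/3}) = o(1)$; multiplying through by $N$ yields \ref{item:P-fibers-empirical-dist}.

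\medskip

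\textbf{Property \ref{item:P-fibers-tail}.} This is a tail bound on the number of large fibers. Fix $\ell \ge 2et$. For a fixed $x \in Q^d$, $\PP(|P_x| \ge \ell) = \PP(\Bin(2^{n-d}, p) \ge \ell) \le \exp(-\ell \log(\ell/(et)))$ by the ``in particular'' clause of \Cref{chernoff} (valid since $\ell \ge 2et \ge e \cdot t = e p 2^{n-d}$). For the count, I would bound $\EE|X_{\ge \ell}| = \EE\bigl|\{x : |P_x| \ge \ell\}\bigr| \le N \exp(-\ell\log(\ell/(et))) = N (et/\ell)^\ell$ and then apply Markov's inequality; to get the bound simultaneously for all $\ell \ge 2et$ with the stated slack $(2et/\ell)^\ell$ versus the expectation $(et/\ell)^\ell$, observe that $(et/\ell)^\ell / (2et/\ell)^\ell = 2^{-\ell} \le 2^{-2et}$, so Markov gives a failure probability at most $2^{-\ell}$ for each $\ell$, and summing the geometric series over integers $\ell \ge 2et$ yields $o(1)$ total. (One should also note there is nothing to prove once $\ell > 2^{n-d}$, since then $X_\ell = \emptyset$.)

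\medskip

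\textbf{Property \ref{item:P-pi-good}.} Here I would invoke \Cref{prop:Risgood}: it suffices to check that $R = \pi(P)$ has the distribution of a random polytope in $Q^d$ whose vertices are included independently with some probability $q \ge 1 - e^{-t}$, with $t \ge 4$ (which holds by \eqref{eq:t}, as $t \ge 8$). Indeed, $x \in R$ iff $|P_x| \ge 1$, and the fibers are independent, so the events $\{x \in R\}$ are independent across $x \in Q^d$, each with probability $q = 1 - (1-p)^{2^{n-d}} \ge 1 - e^{-p2^{n-d}} = 1 - e^{-t}$. Then \Cref{prop:Risgood} directly gives that $R$ is $(2/t)$-good whp.

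\medskip

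The main obstacle, such as it is, is bookkeeping rather than conceptual difficulty: one must be careful that $d = d(n) \to \infty$ (so that all the ``$o(1)$'' statements make sense and the DKW/Chernoff error terms genuinely vanish), which is exactly what \eqref{eq:d} provides via the hypothesis $p \ge 2^{-0.99n}$; and one must handle the ``for every $L$'' / ``for every $\ell$'' quantifiers uniformly --- DKW does this for free in \ref{item:P-fibers-empirical-dist}, while \ref{item:P-fibers-tail} needs the geometric-series trick above. A final union bound over the four $o(1)$ failure probabilities completes the proof.
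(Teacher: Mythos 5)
Your arguments for \ref{item:P-size}, \ref{item:P-fibers-empirical-dist}, and \ref{item:P-pi-good} match the paper's proof essentially line for line. However, your argument for \ref{item:P-fibers-tail} has a genuine gap in the final union bound.

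You correctly compute that Markov's inequality gives a failure probability of at most $2^{-\ell}$ for each individual $\ell$, and then claim that ``summing the geometric series over integers $\ell \ge 2et$ yields $o(1)$ total.'' But this sum equals roughly $2 \cdot 2^{-2et}$, and $t$ is a \emph{constant} (bounded above by $2\max\{8, t_0\}$ by the definition in \eqref{eq:t}), not a quantity growing with $n$. So $2^{-2et}$ is a fixed positive number, not $o(1)$, and the union bound does not close. The paper sidesteps exactly this issue with a two-phase argument: for small $\ell$ (say $\ell \le d/(5\log d)$), it does not use Markov at all, but rather deduces the bound deterministically from property \ref{item:P-fibers-empirical-dist}, via $|X_\ell| \le N\xi_\ell + 2N^{2/3} \le N\bigl((et/\ell)^\ell + 2^{1-d/3}\bigr) \le N(2et/\ell)^\ell$; only for $\ell > d/(5\log d)$ does it invoke Markov, and then the tail sum $\sum_{\ell > d/(5\log d)} 2^{-\ell} \le 2^{1 - d/(5\log d)}$ is genuinely $o(1)$ because $d \to \infty$. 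You would need to incorporate this split (or some other mechanism that pushes the starting index of the geometric series to infinity with $n$) to make \ref{item:P-fibers-tail} go through.
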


\begin{proof}
  To see~\ref{item:P-size}, note that, by our assumption on $p$, we have $\mathbb{E}[|P|] = p2^n \geq 2^{0.01n}$ and that $\Var(|P|) = p(1-p)2^n \le p2^n$.
  Therefore, by Chebyshev's inequality, for every $\delta > 0$,
  \[
    \mathbb{P}\left(\bigl||P| - p2^n\bigr| > \delta p2^n\right) \leq \frac{1}{\delta^2p2^n}.
  \]

  To see that the remaining three properties hold whp, observe first that the random variables $\{|P_x|\}_{x \in Q^d}$ are independent and follow the binomial distribution $\Bin(2^{n-d}, p)$.
  In view of this, property~\ref{item:P-fibers-empirical-dist} is an immediate consequence of \Cref{thm:DKW}.

  To see \ref{item:P-fibers-tail}, observe that $\xi_\ell \le (et/\ell)^\ell$ for all $\ell \ge 2et$, by~\Cref{chernoff}.
  If $\ell \le \frac{d}{5\log d}$, say, it follows from \ref{item:P-fibers-empirical-dist} that with high probability
  \[
    |X_\ell| \le N \cdot \xi_\ell + 2N^{2/3} \le N \cdot \left(\left(\frac{et}{\ell}\right)^\ell + 2^{1-d/3}\right) \le N \cdot \left(\frac{2et}{\ell}\right)^\ell.
  \]
  Otherwise, if $\ell > \frac{d}{5\log d}$, then we may simply use Markov's inequality to deduce that
  \[
    \PP\left(|X_\ell| \ge N \cdot \left(\frac{2et}{\ell}\right)^\ell\right) \le \frac{\mathbb{E}[|X_\ell|]}{N \cdot (2et/\ell)^\ell} = \frac{N \cdot \xi_\ell}{N \cdot (2et/\ell)^\ell} \le 2^{-\ell}.
  \]
  and apply the union bound over all $\ell > \frac{d}{5\log d}$ to conclude that $|X_\ell| \le N \cdot (2et/\ell)^\ell$ with high probability.

  Finally, to see \ref{item:P-pi-good}, note that $x \in R$ if and only if $|P_x| > 0$ and thus $R$ is a $(1-\xi_0)$-random subset of $Q^d$.  Since $\xi_0 = (1-p)^{2^{n-d}} \le e^{-p2^{n-d}} \le e^{-t}$, we may invoke \Cref{prop:Risgood} to conclude that $R$ is $(2/t)$-good with high probability. 
\end{proof}

\subsection{Proof of \Cref{thm:main}}

In this subsection, we present the proof of \Cref{thm:main}. Let $P$, $R$, $d$, $t$, and $\pi$ be defined as in \Cref{subsec:randprojection} and assume that $P$ has properties \ref{item:P-size}--\ref{item:P-pi-good} from \Cref{prop:Pprop}.
Fix some set $A \subseteq P$ and and let $B$, $U$, and $M$ be defined as in \eqref{eq:BUM}, i.e.,
\begin{align*}
  \begin{split}
    B\coloneqq \pi(A)=\{x\in Q^d:\: & P_x \cap A \neq \emptyset\}, \\
    U = U_{\pi}(A)\coloneqq\{x\in Q^d:\: & P_x \subseteq A\}, \\
    M = M_{\pi}(A)\coloneqq\{x\in Q^d:\: & P_x \cap A\neq \emptyset \text{ and } P_x \cap A^\mathsf{c}\neq \emptyset\}.
  \end{split}
\end{align*}

If $t$ were a sufficiently large function of $n$, concentration inequalities would guarantee that whp all fibers have sizes close to $t$.  In particular, this would imply that $|A| \le (1+o(1))t|B|$ and that $|U| \le (1+o(1))|A|/t \le (1/2+o(1))N$.  However, our $t$ is a constant independent of $n$, so we cannot draw such conclusions easily.  Nevertheless, one can still obtain a slightly weaker upper bound on $|U|$ and a relatively good lower bound on the ratio~$|B| / |A|$.

\begin{claim}\label{clm:U}
  If $|A| \leq |P|/2$, then $|U| \leq 3N/5$.
\end{claim}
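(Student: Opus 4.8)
The plan is to compare \(|A|\) with the total size of the fibers lying entirely inside \(A\). Since \(P_x\subseteq A\) for every \(x\in U\), we have \(|A|\ge\sum_{x\in U}|P_x|\), and this sum is at least the sum of the \(|U|\) smallest values among \(\{|P_x|\}_{x\in Q^d}\). Writing \(s_k\) for the sum of the \(k\) largest fiber sizes, and using \(\sum_{x\in Q^d}|P_x|=|P|\), this reads \(|A|\ge |P|-s_{|U^{\mathsf{c}}|}\); combined with \(|A|\le|P|/2\) it gives \(s_{|U^{\mathsf{c}}|}\ge|P|/2\). It therefore suffices to prove that \(s_k<|P|/2\) whenever \(k<2N/5\), as then \(|U^{\mathsf{c}}|\ge 2N/5\), i.e.\ \(|U|\le 3N/5\).

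To bound \(s_k\), I would introduce the threshold \(m\) defined as the smallest integer with \(\PP\bigl(\Bin(2^{n-d},p)\le m+1\bigr)\ge 3/5\). Two properties of \(m\) are needed. First, \(m=O(t)\), hence \(m=O(1)\) by~\eqref{eq:t}: writing \(T\coloneqq\Bin(2^{n-d},p)\), Chebyshev's inequality (the variance of \(T\) is at most \(t\)) shows that already \(m'\coloneqq\lceil t+\sqrt{5t}\,\rceil\) satisfies \(\PP(T\le m'+1)\ge 4/5\), so \(m\le m'\). Second, by minimality \(\PP(T\le m)<3/5\), so \(\xi_{>m}\coloneqq\PP(T>m)>2/5\), and hence property~\ref{item:P-fibers-empirical-dist} gives \(|X_{>m}|=N-|X_{\le m}|>(2/5)N-N^{2/3}\). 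Accounting for fibers of size exceeding \(m\) and fibers of size at most \(m\) separately, one has, for every \(k\),
\[
  s_k\le\sum_{\ell>m}\ell\,|X_\ell|+m\cdot\bigl(k-|X_{>m}|\bigr)^+,
\]
and for \(k<2N/5\) we have \(k-|X_{>m}|<N^{2/3}\), so, since \(m=O(1)\),
\[
  s_k\le\sum_{\ell>m}\ell\,|X_\ell|+O(N^{2/3}).
\]

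It remains to bound \(\sum_{\ell>m}\ell\,|X_\ell|=|P|-\sum_{\ell\le m}\ell\,|X_\ell|\), i.e.\ to bound \(\sum_{\ell\le m}\ell\,|X_\ell|\) from below. Here the tilt estimate is the key input: since \(t\ge t_0\) by~\eqref{eq:t}, \Cref{lemma:binomial-tilt} applied with \(2^{n-d}\) in place of \(n\) yields \(\sum_{\ell\le m}\ell\,\xi_\ell=\EE[T\1_{T\le m}]\ge 5t/9\). A summation-by-parts identity expresses \(\sum_{\ell\le m}\ell\,|X_\ell|\) in terms of the partial counts \(|X_{\le j}|\) for \(j\le m\); comparing these with \(\xi_{\le j}N\) via property~\ref{item:P-fibers-empirical-dist} gives \(\bigl|\sum_{\ell\le m}\ell\,|X_\ell|-N\sum_{\ell\le m}\ell\,\xi_\ell\bigr|\le 2mN^{2/3}=o(N)\), hence \(\sum_{\ell\le m}\ell\,|X_\ell|\ge(5t/9)N-o(N)\). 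With property~\ref{item:P-size} (so \(|P|=(1+o(1))\,tN\)) we conclude \(\sum_{\ell>m}\ell\,|X_\ell|\le(4/9+o(1))\,tN\), and therefore \(s_k\le(4/9+o(1))\,tN<(1/2+o(1))\,tN=(1+o(1))|P|/2\) for every \(k<2N/5\), once \(n\) is large enough. This contradicts \(s_{|U^{\mathsf{c}}|}\ge|P|/2\) under the assumption \(|U^{\mathsf{c}}|<2N/5\), which proves the claim.

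The crux is the final numerical comparison: the \(o(N)\) error from property~\ref{item:P-size} and the \(N^{2/3}\) slack from property~\ref{item:P-fibers-empirical-dist} must all be dominated by the surplus of \(\sum_{\ell\le m}\ell\,|X_\ell|\) over \(|P|/2\), which is \((5/9-1/2)\,tN=tN/18=\Omega(N)\) precisely because of \Cref{lemma:binomial-tilt}. The delicate point is thus the choice of \(m\): it must lie high enough (just above the \(60\)th percentile of \(T\)) for the tilt lemma to apply, yet be \(O(1)\) so that the accumulated errors stay \(o(N)\) — both of which are secured by the Chebyshev bound \(m=O(t)\).
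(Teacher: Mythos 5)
Your proof is correct and follows essentially the same route as the paper's: lower-bound $|A|$ by the mass concentrated in the small fibers, then combine \Cref{lemma:binomial-tilt} with property~\ref{item:P-fibers-empirical-dist} to show that this mass is at least $(5/9+o(1))tN > |P|/2$, a contradiction. The only real difference is cosmetic — you fix the threshold $m$ deterministically as a binomial quantile (with a Chebyshev bound to get $m=O(1)$) and argue through the sum $s_k$ of the $k$ largest fibers, whereas the paper chooses $m$ via the random counts $|X_{\le m}|$ and works with $\sum_{x\in U}|P_x|$ directly.
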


\begin{proof}
  Suppose that $|U| > 3N/5$ and observe that
  \[
    |A| \ge \sum_{x \in U} |P_x| = \sum_{\ell \ge 1} \ell \cdot |X_\ell \cap U| \ge \sum_{\ell = 1}^m \ell \cdot |X_\ell|,
  \]
  where $m$ is the largest integer such that $|X_{\le m}| \le 3N/5$.
  Let $T \sim \Bin(2^{n-d}, p)$.
  Property \ref{item:P-fibers-empirical-dist} and the maximality of $m$ imply that
  \[
    \PP(T \le m) = \xi_{\le m} \le 3/5 + o(1)
    \qquad
    \text{and}
    \qquad
    \PP(T \le m+1) = \xi_{\le m+1} \ge 3/5 - o(1),
  \]
  so in particular $m \le 3t$, as $\PP(T > 3t) \le 1/3$.
  Finally, by \Cref{lemma:binomial-tilt} and \eqref{eq:t},
  \[
    \sum_{\ell=1}^m \ell \cdot |X_\ell| \ge \sum_{\ell=1}^m \ell \cdot \xi_\ell \cdot N - o(mN) = \mathbb{E}[T \cdot \1_{T \le m}] \cdot N - o(tN) \ge 5Nt/9 - o(tN),
  \]
  which implies that $|A| > |P|/2$.
\end{proof}

\begin{claim}\label{clm:B}
  For every $A \subseteq P$ and $B = \pi(A)$ such that $|B| \leq 3N / 4$, we have
  \begin{align*}
    |A| \leq |B| \cdot \frac{C_t \log ( N/|B|)}{\log \left(C_t\log\left(N/|B|\right)\right)},
  \end{align*}
  where $C_t$ is a constant depending only on $t$.
\end{claim}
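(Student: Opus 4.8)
The plan is to forget that $P$ is random and work only with property~\ref{item:P-fibers-tail}. First, since every vertex of $A$ lies in a fiber over $B$, one has $|A| = \sum_{x\in B}|A\cap P_x|\le\sum_{x\in B}|P_x|$, and a sum over $|B|$ fibers is at most the sum of the $|B|$ largest fiber sizes of $P$. Writing $a_j$ for the number of $x\in Q^d$ with $|P_x|\ge j$, a standard rearrangement gives
\[
  |A|\ \le\ \sum_{x\in B}|P_x|\ \le\ \sum_{j\ge 1}\min\bigl(|B|,\,a_j\bigr),
\]
so the task reduces to controlling the decay of $a_j$. For $j\ge 2et$, property~\ref{item:P-fibers-tail} yields $a_j=\sum_{\ell\ge j}|X_\ell|\le N\sum_{\ell\ge j}(2et/\ell)^\ell$, and as soon as $j\ge 4et$ the summand decreases by a factor at least $2$ at each step, so that $a_j\le 2N(2et/j)^j$; in particular $a_j$ decays super-exponentially past $4et$.

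Next I would split the sum at the threshold $j^*$, defined as the smallest integer $j>4et$ with $2N(2et/j)^j\le|B|$ (which exists because the expression tends to $0$). For $j\le 4et$ and for $4et<j<j^*$ I would bound $\min(|B|,a_j)\le|B|$, which contributes at most $(j^*+O_t(1))\,|B|$ in total; for $j\ge j^*$ I would use $a_j\le 2N(2et/j)^j$ together with the geometric decay to get $\sum_{j\ge j^*}a_j\le 4N(2et/j^*)^{j^*}\le 2|B|$. This gives $|A|\le(j^*+O_t(1))|B|$, so it remains to show that $j^*\le\frac{C_t\log(N/|B|)}{\log\bigl(C_t\log(N/|B|)\bigr)}$ for a suitable constant $C_t=C_t(t)$. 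By minimality of $j^*$, either $j^*$ is within an additive $O_t(1)$ of $4et$, or $(j^*-1)\log\bigl((j^*-1)/(2et)\bigr)<\log(2N/|B|)$.

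The heart of the matter is inverting this last inequality, and the only delicate point is the size of $\log(N/|B|)$. If $\log(2N/|B|)\le(2et)^2$, then crudely $(j^*-1)\log 2<(2et)^2$, so $j^*=O_t(1)$, and since $|B|\le 3N/4$ keeps $\log(N/|B|)$ bounded away from $0$, the claimed bound holds once $C_t$ is taken large enough in terms of $t$. If instead $\log(2N/|B|)>(2et)^2$, then $\log(N/|B|)$ is large, and assuming toward a contradiction that $j^*$ exceeds $4\log(2N/|B|)/\log\log(2N/|B|)$ and substituting back into $(j^*-1)\log((j^*-1)/(2et))<\log(2N/|B|)$ forces $\bigl(\log(2N/|B|)\bigr)^{3/4}<\frac{et}{2}\log\log(2N/|B|)$, which is false throughout that range; hence $j^*=O\bigl(\log(N/|B|)/\log\log(N/|B|)\bigr)$ with an absolute implied constant, and comparing natural logarithms with their $C_t$-scaled counterparts turns this into the stated bound for $C_t$ large. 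I expect this inversion, together with the chore of making every constant depend on $t$ alone, to be the only real obstacle; everything else is bookkeeping around property~\ref{item:P-fibers-tail}.
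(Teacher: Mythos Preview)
Your argument is correct, and it takes a genuinely different route from the paper's proof. The paper defines $f(x)\coloneqq(x/(4et))^x$, writes $|A|\le\sum_\ell b_\ell\cdot(\ell\vee L)$ with $L=\lceil 4et\rceil$ and $b_\ell=|\{x\in B:|P_x|=\ell\}|$, and then applies Jensen's inequality to the concave function $f^{-1}$ to obtain $|A|\le|B|\cdot f^{-1}\bigl(f(L)+2^{-L}N/|B|\bigr)$; the final inversion of $f$ is essentially the same computation you do at the end. Your approach instead discards the information about which fibers lie over $B$, bounding $|A|$ by the sum of the $|B|$ largest fiber sizes via the layer-cake identity $\sum_{j\ge 1}\min(|B|,a_j)$, and then splits at the threshold $j^*$ where the super-exponential tail from~\ref{item:P-fibers-tail} drops below $|B|$. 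The paper's Jensen step is a cleaner one-line replacement for your threshold split, but your argument is more elementary in that it uses only monotonicity and geometric summation rather than concavity; both routes inevitably land on the same implicit equation $j\log(j/O(t))\asymp\log(N/|B|)$ to invert. Your case analysis for the inversion (small versus large $\log(N/|B|)$) is fine; the constants in the contradiction step need minor tidying when passing from $j^*$ to $j^*-1$, but this is absorbed by taking $C_t$ large.
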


\begin{proof}
  Let $L \coloneqq \lceil 4et \rceil$ and let $f \colon [L, \infty) \to \mathbb{R}$ be the function defined by $f(x) \coloneqq (x/(4et))^x$.  One can check that $f$ is strictly increasing and convex and therefore $f^{-1} \colon [f(L), \infty) \to [L, \infty)$ is well-defined and concave.
  Define, for each positive integer $\ell$,
  \[
    b_\ell \coloneqq |\{x \in B : |P_x| = \ell\}|,
  \]
  and observe that, writing $\ell \vee L$ for $\max\{\ell, L\}$,
  \[
    \begin{split}
      |A|
      & = \sum_{x \in B} |\pi^{-1}(\{x\}) \cap A| \le \sum_{x \in B} |P_x| = \sum_{\ell \ge 1}b_\ell\cdot \ell \le \sum_{\ell \ge 1} b_\ell \cdot (\ell \vee L) \\
      & = \sum_{\ell \ge 1} b_\ell \cdot f^{-1}(f(\ell \vee L)) \le |B| \cdot f^{-1}\left( \sum_{\ell \ge 1} \frac{b_\ell}{|B|} \cdot f(\ell \vee L)\right),
    \end{split}
  \]
  where the last inequality follows from concavity of $f^{-1}$ and the fact that $b_1 + b_2 + \dotsb = |B|$.
  Since $b_{\ell}\leq |X_{\ell}|\leq 2^{-\ell} \cdot N / f(\ell)$ for all $\ell\geq L$, by item~\ref{item:P-fibers-tail} in \Cref{prop:Pprop}, we have
  \[
    \sum_{\ell \ge 1} b_\ell \cdot f(\ell \vee L) \le |B| \cdot f(L) + N \cdot \sum_{\ell > L} 2^{-\ell} = |B| \cdot f(L) + 2^{-L} \cdot N.
  \]
  We may thus conclude that
  \[
    |A| \le |B| \cdot f^{-1}\left(f(L) + 2^{-L} \cdot \frac{N}{|B|}\right).
  \]
  It thus suffices to argue that there is a constant $C$ such that, for all $x \ge 4/3$,
  \begin{equation}
    \label{eq:B-f-sufficient}
    f^{-1} \left( f(L) + 2^{-L} \cdot x \right) \le \frac{ C\log x}{\log (C\log x)}.
  \end{equation}
  Indeed, when $C$ is large, we have $4et \log (C \log x) \le (C \log x)^{1/2}$ for all $x \ge 4/3$, and thus
  \[
    f\left(\frac{ C\log x}{\log (C\log x)}\right) \ge \left(C \log x\right)^{\frac{1}{2} \cdot \frac{C \log x}{\log (C \log x)}} = x^{C/2} \ge f(L) + 2^{-L} \cdot x,
  \]
  which implies~\eqref{eq:B-f-sufficient}, as $f$ is strictly increasing.
\end{proof}

We are now able to complete the proof of \Cref{thm:main}.
Let $C_t$ be the constant from \Cref{clm:B}.
We consider two cases:

\vspace{0.2cm}

\noindent \underline{Case 1}: $|M| \ge N / 20$

\vspace{0.2cm}

By \Cref{lem:projection} and \Cref{prop:Pprop} \ref{item:P-size},
\[
  e_{G_P}(A, A^{\mathsf{c}}) \ge |M| \ge \frac{N}{20} = \frac{(1+o(1))|P|}{20t} \ge \frac{|A|}{11t}.
\]

\vspace{0.2cm}

\noindent \underline{Case 2}: $|M| < N / 20$

\vspace{0.2cm}

Since $B$ is the union of $U$ and $M$ and $|U| \le 3N/5$, by \Cref{clm:U}, we have $|B| \le 3N/5 + N/20 \leq 3N/4$.
We may thus use \Cref{lem:projection} and \Cref{thm:R} to conclude that
\begin{align}\label{eq:concluding}
  e_{G_P}(A, A^{\mathsf{c}}) \geq e_{G_R}(B, B^{\mathsf{c}}) \geq \frac{|B|}{8}\log_2\left(\frac{N}{|B|}\right) \ge \frac{|A|}{8C_t}\log\left(C_t\log\left(\frac{N}{|B|}\right)\right)\geq \frac{|A|}{8C_t},
\end{align}
where the second to last inequality follows from \Cref{clm:B}. This concludes the first part of \Cref{thm:main}. 

To check the second part of the theorem, choose $\eta>0$ sufficiently small so that
\[
  \eta\leq \frac{1}{3t} \qquad \text{and} \qquad \log\left(C_t\log\left(\frac{1}{2\eta t}\right)\right) \geq 8C_t,
\]
and suppose that $A\subseteq P$ satisfies $|A|\leq \eta |P|$.  By \Cref{prop:Pprop} \ref{item:P-size}, it holds that
\[
  |B| \leq |A| \le \eta \cdot 2tN\leq 3N/4.
\]
We may thus use \Cref{lem:projection} and \Cref{thm:R} to conclude, as in \eqref{eq:concluding}, that
\begin{align}\label{eq:concluding2}
  e_{G_P}(A, A^{\mathsf{c}}) \geq \frac{|A|}{8C_t}\log\left(C_t\log\left(\frac{N}{|B|}\right)\right) \ge \frac{|A|}{8C_t} \log\left(C_t\log\left(\frac{1}{2\eta t}\right)\right) \ge |A|.
\end{align}

\section{Remarks}\label{sec:remarks}

In this paper, we studied the edge expansion of graphs arising from random $0/1$-polytopes. We showed that, with high probability, the edge expansion is bounded below by a positive constant. This result aligns with a broader perspective related to the Mihail--Vazirani conjecture, which asserts that every $0/1$-polytope has edge expansion at least $1$ --- a bound that is tight, as witnessed by the $n$-dimensional Boolean hypercube.

Our findings suggest that the edge expansion of random $0/1$-polytopes may be significantly better.  Indeed, our analysis in the last part of the proof (see \eqref{eq:concluding2}) shows the following stronger statement holds with high probability in the graph of $P \sim P_{n,p}$, provided that $p = p(n) \ge 2^{-0.99n}$: For every $K>0$, there exists $\eta = \eta(K)>0$ such that every $A \subseteq P$ satisfying $|A|\leq \eta|P|$ has edge expansion at least $K$. We believe that, as long as the parameter $p$ is bounded away from~$1$, the edge expansion of the graph of $P_{n,p}$ is not just bounded from below by $1$, but actually becomes larger, and possibly tends to infinity as $n \to \infty$.
As a concrete first step in this direction, we propose the following question:

\begin{question}
  Is it true that for \( p = o(1) \) and $p 2^n\rightarrow \infty$, with high probability, \( h(G_{P_{n,p}}) \to \infty \) as \( n \to \infty \)?
\end{question}

We remark that \Cref{prop:sparse} answers this question affirmatively if $p \le c^{-n}$ for some $c < 7^{-1/3}$. For larger values of $p$, a possible heuristic goes as follows. Suppose that $p \le n^{-C}$ for some sufficiently large constant $C>0$ and choose $d$ so that $p \le c^{n-d}$ for some $c<7^{-1/3}$. Then an application of \Cref{prop:sparse} shows that, with high probability, the graphs of most fibers $P_x$ are complete for $x\in Q^d$. Moreover, one can check that $P_{xy}$ is typically complete for most pairs $\{x,y\}\in G_{Q^d}$. These observations suggest that, in this regime, the graph of the polytope becomes significantly more well structured, and perhaps this could lead to an affirmative answer to the question. We also make the following conjecture:

\begin{conjecture}
  There exists a constant \( c > 1 \) such that for all \( p \leq 0.999 \) with $p2^n\rightarrow \infty$, with high probability, \( h(G_{P_{n,p}}) \geq c \).
\end{conjecture}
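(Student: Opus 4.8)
Since the statement is a conjecture, the following is a proposed line of attack rather than a complete argument. Write $G = G_P$ for $P \sim P_{n,p}$. As in the proof of \Cref{thm:main}, the problem reduces to controlling \emph{balanced} cuts: by \Cref{prop:sparse} one may assume $p \ge 2^{-0.99n}$ (for smaller $p$, $G$ is whp a clique and $h(G) \to \infty$), and the strengthening of \Cref{thm:main} recorded in \Cref{sec:remarks} (cf.\ \eqref{eq:concluding2}) provides, for $K = 2$, an $\eta > 0$ with $e_G(A, A^{\mathsf c}) \ge 2|A|$ whp for all $A$ with $|A| \le \eta|P|$. It thus suffices to find an absolute $c > 1$ with $e_G(A, A^{\mathsf c}) \ge c|A|$ for every $A$ satisfying $\eta|P| \le |A| \le |P|/2$. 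The key point, already flagged in \Cref{sec:remarks}, is that the projection method behind \Cref{thm:main} loses a factor $t = p2^{n-d}$ on such sets (both $|M|$ and $e_{G_R}(B,B^{\mathsf c})$ are $O(2^d)$ while $|A| = \Theta(t2^d)$), so one must exploit the edges of $G$ that are \emph{not} hypercube edges --- those created by convexity when vertices of $Q^n$ are deleted.

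The mechanism I would use is \Cref{prop:geometry}: for a deleted vertex $v \in Q^n \setminus P$ and two hypercube-neighbours $x = v+e_i$, $y = v+e_j$ that both lie in $P$ (so $x,y \in P$ are at Hamming distance $2$), applying \Cref{prop:geometry} with $k=2$, the $2$-face $F = \{v, x, y, v+e_i+e_j\}$, and $F' = \{v, x\}$ (for which $P \cap F' = \{x\}$) shows that $\{x,y\}$ is an edge of $G$, and a genuinely new one since $x$ and $y$ are not hypercube-adjacent. Hence, if $H$ denotes the auxiliary graph on vertex set $P$ in which $x \sim_H y$ whenever $x, y \in P$ are at Hamming distance $2$ and at least one of their two common $Q^n$-neighbours is missing from $P$, then $H \subseteq G$, and since $H$-edges survive in $G$ verbatim we get $e_G(A, A^{\mathsf c}) \ge e_H(A, A^{\mathsf c})$ for every cut. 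It therefore suffices to prove that whp $h(H) \ge c$ for some absolute $c > 1$; in fact one may hope for $h(H) \to \infty$ whenever $p$ is bounded away from $1$.

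Now $H$ is a random subgraph of the \emph{halved cube} (the distance-$2$ graph of $Q^n$): it is obtained by restricting to the $p$-random vertex set $P$ and then keeping each distance-$2$ edge (essentially independently) with probability $1-p^2$, the chance that one of its two midpoints is deleted. The halved cube is vertex-transitive of degree $\binom n2$ and has excellent edge expansion --- one expects $h = \Theta(n)$, provable by a compression argument in the spirit of \Cref{thm:Harper} --- so for $p$ bounded away from $0$ and $1$ (indeed whenever $p \gg n^{-2}$, so that typical $H$-degrees grow) one expects $H$ to inherit this expansion up to constants, giving $h(H) = \Omega((1-p^2)n) \to \infty$, far more than required. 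For smaller $p$ --- where $H$ becomes too sparse --- one instead reduces a scale: choose $d$ with $t = p2^{n-d}$ of constant order, so that $R = \pi(P)$ has density $1 - e^{-\Theta(1)}$ bounded away from $0$ and $1$, use \Cref{prop:sparse} to make the fibre graphs $G_{P_x}$ and the pair-fibre graphs $G_{P_{xy}}$ complete for most $x$ and most edges $xy$ of $Q^d$ (valid once $n - d$ is large enough), and run the halved-cube argument for $R$ at dimension $d$; the case $p \le 7^{-n/3}$ is already the clique case of \Cref{prop:sparse}, and \Cref{thm:verydense} covers the very dense projections.

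The main obstacle is the expansion of $H$, uniformly over all balanced cuts chosen adversarially after the randomness is revealed. For a genuinely random graph this would be routine, but $H$ carries the rigid local structure of the halved cube, and the natural route --- an edge-isoperimetric inequality for the halved cube together with a ``random vertex-deletion and sparsification preserve expansion'' statement strong enough to survive a union bound over the exponentially many candidate sets $A$ --- cannot be quoted off the shelf. A secondary difficulty is gluing the two scales in the intermediate density range ($p$ roughly between $2^{-0.94n}$ and $\poly(1/n)$): one must combine completeness of fibres at one scale with halved-cube expansion of the projection at the other, via a projection lemma in the spirit of \Cref{lem:projection}, and check that the first-moment error terms from \Cref{prop:sparse} stay negligible throughout. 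As a sanity check, the coordinate cut $A = \{x \in P : x_1 = 0\}$ already has $\Omega(n(1-p)p)$ new cross-edges per vertex of $A$ --- exactly the cherry count above for this particular $A$ --- so the target quantity is believably large; turning this into a worst-case statement over all $A$ is precisely what the $H$-expansion step must achieve.
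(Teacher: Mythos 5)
This statement is a \emph{conjecture}; the paper does not prove it, and the remarks in \Cref{sec:remarks} only sketch a heuristic (complete fibres and pair-fibres for small $p$). There is therefore no paper argument to compare against, and I evaluate your proposed line of attack on its own.

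There is one concrete error in the reduction that goes beyond the gaps you already flag. You define $H$ as the graph on $P$ whose edges are the distance-$2$ pairs with at least one common $Q^n$-neighbour deleted, correctly observe $H \subseteq G$ via \Cref{prop:geometry}, and then write that ``it therefore suffices to prove that whp $h(H) \ge c$.'' But $h(H) = 0$ whenever $P$ meets both parity classes of $Q^n$, which holds whp since each class contains $(1+o(1))p2^{n-1}$ points of $P$: all $H$-edges join vertices of equal parity, so the cut $A = \{x \in P : \sum_i x_i \text{ even}\}$ (a near-bisection of $P$) has $e_H(A, A^{\mathsf c}) = 0$. The same objection kills the stronger claim $h(H) = \Omega\bigl((1-p^2)n\bigr)$. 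The reduction must therefore be run for $G' \coloneqq H \cup G_{Q^n}[P]$ rather than $H$ alone --- the $Q^n$-edges are what connect the two parity classes --- which is in the spirit of how the paper's \Cref{lem:deterministic} mixes hypercube edges with \Cref{prop:geometry}-edges, but it does change what has to be proved. A related care point: deletions of a single $v$ simultaneously create up to $\binom{n}{2}$ edges of $H$, so the ``essentially independent sparsification'' intuition overstates how much freedom there is when a union bound over all $A$ is attempted.

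Beyond this, the central obstacle you identify --- establishing an edge-isoperimetric inequality for the halved cube together with a robustness-under-random-deletion statement tight enough to survive a union bound over exponentially many balanced $A$ --- is real and is exactly the sort of thing the paper's current machinery (a projection to a constant-fibre-size scale followed by \Cref{thm:verydense}) cannot deliver, because on balanced cuts both $|M|$ and $e_{G_R}(B,B^{\mathsf c})$ are $O(2^d) = O(|A|/t)$. The two-scale gluing for intermediate $p$ (completeness of $P_x$ and $P_{xy}$ at one scale, halved-cube expansion of $R$ at the other) is also only sketched and would require its own version of \Cref{lem:projection} that preserves the extra distance-$2$ edges across fibres, which the current statement does not do. In short: the mechanism (\Cref{prop:geometry}-edges from deleted midpoints) is the right ingredient and matches the paper's own intuitions, but the proposal as written has the parity flaw above and, as you say yourself, the key expansion step is missing.
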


Further exploration of this question may not only shed light on the probabilistic behavior of polytope graphs, but could also offer new insights into the general Mihail--Vazirani conjecture by contrasting worst-case and average-case behaviors.

\bibliography{literature}

\end{document}